\theoremstyle{definition}
\def\fnum{equation}
\newtheorem{Thm}[\fnum]{Theorem}
\newtheorem{Cor}[\fnum]{Corollary}
\newtheorem{Lem}[\fnum]{Lemma}
\newtheorem{Con}[\fnum]{Conjecture}
\numberwithin{equation}{section}
\newcommand{\Vol}{{\text{Vol}}}
\newcommand{\V}{{\text{V}}}
\newcommand{\Ric}{{\text{Ric}}}
\newcommand{\Tr}{{\text{Tr}}}
\newcommand{\Hess}{{\text {Hess}}}
\def\ZZ{{\bold Z}}
\def\RR{{\bold R}}
\def\SS{{\bold S}}
\newcommand{\e}{{\text {e}}}
\newcommand{\cL}{{\mathcal{L}}}
\newcommand{\cH}{{\mathcal{H}}}
\newcommand{\cN}{{\mathcal{N}}}
\newcommand{\cM}{{\mathcal{M}}}
\newcommand{\cP}{{\mathcal{P}}}
\newcommand{\eqr}[1]{(\ref{#1})}
 \newenvironment{dedication}
        {\vspace{6ex}\begin{quotation}\begin{center}\begin{em}}
        {\par\end{em}\end{center}\end{quotation}}
\def\bH{{\bold H}}
\title[Liouville properties]{Liouville properties}
\author[]{Tobias Holck Colding}%
\address{MIT, Dept. of Math.\\
77 Massachusetts Avenue, Cambridge, MA 02139-4307.}
\author[]{William P. Minicozzi II}%
\thanks{The  authors
were partially supported by NSF Grants DMS 1812142 and DMS 1707270.}
\email{colding@math.mit.edu  and minicozz@math.mit.edu}
\begin{document}

\maketitle

\begin{dedication}
To S.T. Yau on his seventieth birthday.
\end{dedication}

\begin{abstract}
The classical Liouville theorem states that a bounded  harmonic function on all of $\RR^n$ must be constant.  In the early 1970s, S.T. Yau vastly generalized this, showing that it holds
 for manifolds with nonnegative Ricci curvature.  Moreover, he conjectured a stronger Liouville property that has generated many significant developments.  We will first discuss this conjecture and some of the ideas 
 that went into its proof.
 
We will also discuss two recent  areas where this circle of ideas has played a major role.  One is Kleiner's new proof of Gromov's classification of groups of polynomial growth and the developments this generated.  Another is to understanding singularities of mean curvature flow in high codimension.   We will see that some of the ideas discussed in this survey naturally lead  to a new approach to studying and classifying singularities of mean curvature flow in higher codimension.  This is a subject that has been notoriously difficult and where much less is known than for hypersurfaces.  
\end{abstract}

\section{Introduction}

The classical Liouville theorem, named after Joseph Liouville  (1809 - 1882), states that a bounded (or even just positive) harmonic function on all of $\RR^n$ must be constant.   There is a very short proof of this for  bounded functions using the mean value property:  

\begin{quote}
Given two points, choose two balls with the given points as centers and of equal radius. If the radius is large enough, the two balls will coincide except for an arbitrarily small proportion of their volume. Since the function is bounded, the averages of it over the two balls are arbitrarily close, and so the function assumes the same value at any two points.
\end{quote}

The Liouville theorem has had a huge impact across many fields, such as complex analysis, partial differential equations, geometry, probability, discrete mathematics and   complex and algebraic geometry.  As well as many applied areas.  The impact of the Liouville theorem has been even larger as the starting point of many further developments.  

On manifolds with nonegative Ricci curvature, mean values inequalities hold,  but are no longer equalities,  and the above proof does not give a Liouville type property.  However, in the 1970s, S.T. Yau, \cite{Ya1}, showed that the Liouville theorem holds for such manifolds.  Later, in the mid 1970s, Yau together with S.Y. Cheng, \cite{CgYa}, showed a gradient estimate on these manifolds giving an effective version of the Liouville theorem; see also, Schoen, \cite{Sc}.  

The situation is very different for negatively curved manifolds such as hyperbolic space.  This is easiest seen in two dimensions where being harmonic is conformally invariant, so each harmonic function on the Euclidean disk is also harmonic in the hyperbolic metric. 
In particular,  each continuous function on the circle extends to a harmonic function on the disk and the space of bounded harmonic functions is infinite dimensional; cf. Anderson, \cite{A}, Sullivan, \cite{S}, and Anderson-Schoen, \cite{ASc}. 

In general, given a complete manifold $M$ and a  nonnegative constant $d$,  $\cH_d(M)$ is the linear space of harmonic functions of polynomial growth at most $d$:
\begin{quote}
That is, $u \in \cH_d (M)$ if $\Delta u = 0$ and for some $p \in M$ and a constant $C_u$ 
$$
	\sup_{B_R(p)} |u| \leq C_u\, (1 + R)^d {\text{ for all }} R \, .
$$
\end{quote}

In 1974, S.T. Yau conjectured the following stronger Liouville property: 
  
  \begin{Con}	\label{c:yau}
  If $M^n$ has $\Ric \geq 0$, then $\cH_d (M)$ is finite dimensional for each $d$.
  \end{Con}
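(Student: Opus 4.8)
The plan is to produce a bound $\dim K\le C(n,d)$ that holds for \emph{every} finite-dimensional subspace $K\subseteq\cH_d(M)$; taking the supremum over all such $K$ then shows $\dim\cH_d(M)<\infty$. (We may assume $M$ connected, since otherwise the statement fails for the trivial reason that locally constant functions on infinitely many components are harmonic of polynomial growth.) I would use two standard consequences of $\Ric\ge0$: the Bishop--Gromov volume comparison, so that $\Vol(B_{sR}(x))\le s^n\Vol(B_R(x))$ for $s\ge1$ and $\Vol(B_R(p))\le\omega_nR^n$; and the mean value inequality with a \emph{dimensional} constant, $w(x)\le \dfrac{C(n)}{\Vol(B_r(x))}\displaystyle\int_{B_r(x)}w$ for nonnegative subharmonic $w$ (following Li--Schoen; it follows from volume doubling together with a Neumann--Poincar\'e inequality). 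Applied to $w=u^2$ with $u$ harmonic this gives $u(x)^2\le \dfrac{C(n)}{\Vol(B_r(x))}\displaystyle\int_{B_r(x)}u^2$.

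Fix $p\in M$ and $K$ with $k=\dim K$, and for $R>0$ put $I_R(u)=\int_{B_R(p)}u^2$, an inner product on $K$ (positive definite for every $R$ by unique continuation) with $I_R\le I_{2R}$ since $B_R(p)\subseteq B_{2R}(p)$. Let $\lambda_1(R)\le\cdots\le\lambda_k(R)\in(0,1]$ be the eigenvalues of $I_R$ relative to $I_{2R}$ (i.e.\ the numbers with $I_R(u)=\lambda\,I_{2R}(u)$ for some $u\neq0$), and pick a basis $u_1,\dots,u_k$ of $K$ that is $I_{2R}$-orthonormal and diagonalizes $I_R$, so that $\sum_i\lambda_i(R)=\Tr(I_R\mid I_{2R})=\int_{B_R(p)}\sum_i u_i^2$ and $\prod_i\lambda_i(R)=f(R)/f(2R)$, where $f(R)=\det\bigl[\int_{B_R(p)}v_iv_j\bigr]_{i,j}$ for a fixed basis $v_1,\dots,v_k$ of $K$.

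The crucial estimate --- and the one I expect to be the main obstacle --- is the trace bound $\Tr(I_R\mid I_{2R})\le C_1(n)$, \emph{independent of $k$}. Since $\{u_i\}$ is $I_{2R}$-orthonormal, Cauchy--Schwarz gives the pointwise identity $\sum_i u_i(x)^2=\sup\{u(x)^2: u\in K,\ I_{2R}(u)=1\}$, so it is enough to bound $u(x)^2$ for a \emph{single} unit vector $u$ and $x\in B_R(p)$. For such $x$ one has $B_R(x)\subseteq B_{2R}(p)\subseteq B_{3R}(x)$, so the mean value inequality gives $u(x)^2\le C(n)/\Vol(B_R(x))$ while Bishop--Gromov gives $\Vol(B_R(x))\ge 3^{-n}\Vol(B_{2R}(p))$; hence $\sum_i u_i(x)^2\le 3^nC(n)/\Vol(B_{2R}(p))$ on $B_R(p)$. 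Integrating, $\Tr(I_R\mid I_{2R})\le 3^nC(n)\,\Vol(B_R(p))/\Vol(B_{2R}(p))\le 3^nC(n)=:C_1(n)$. The whole point is that the mean value inequality is used \emph{once} rather than $k$ times --- this is where the geometry of $\Ric\ge0$ enters, and the uniformity in $k$ is the essential phenomenon.

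It remains to use polynomial growth to bound $\prod_i\lambda_i(R)$ from below along a sequence $R\to\infty$. Hadamard's inequality and $v_i\in\cH_d$ give $f(R)\le\prod_i\int_{B_R(p)}v_i^2\le C_K R^{(2d+n)k}$ for $R\ge1$, while $f$ is positive and nondecreasing; an elementary lemma (otherwise $f$ would outgrow $R^{(2d+n)k}$ along $R=2^m$) then supplies arbitrarily large $R$ with $f(2R)\le 2^{(2d+n)k+1}f(R)$, i.e.\ $\prod_i\lambda_i(R)\ge 2^{-(2d+n)k-1}$. For such $R$, setting $c_0=4^{-(2d+n)}$ and $m=\#\{i:\lambda_i(R)\ge c_0\}$, the chain $2^{-(2d+n)k-1}\le\prod_i\lambda_i(R)\le c_0^{\,k-m}$ forces $m\ge(k-1)/2$, whence $C_1(n)\ge\sum_i\lambda_i(R)\ge mc_0\ge\tfrac{k-1}{2}\,c_0$ and therefore $k\le 1+2C_1(n)\,4^{2d+n}$. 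Since this bounds $\dim K$ for all finite-dimensional $K\subseteq\cH_d(M)$, the conjecture follows. I expect the exponential-in-$d$ bound obtained this way to be very far from sharp; getting the optimal $\dim\cH_d(M)\le C(n)\,d^{\,n-1}$ would require replacing the crude Gram-determinant step by a more delicate covering-and-counting argument that tracks where the $L^2$-mass of elements of $K$ concentrates on annuli, which I would not attempt in this survey.
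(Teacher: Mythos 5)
Your proof is correct. It gives a complete argument for Conjecture~\ref{c:yau}, but it follows a genuinely different route than the one sketched in Section~4 of this survey for \cite{CM4}.

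The argument in \cite{CM4} separates into (i) a pigeonhole step that, from a $2k$-dimensional subspace $H$, extracts a $k$-dimensional subspace $K$ and a radius $R$ so that \emph{every} $v\in K$ has the doubling bound \eqr{e:step1}, and (ii) a dimension bound for any $K$ satisfying \eqr{e:step1}, proved by covering $B_R$ by $N\lesssim r^{-n}$ small balls, testing with mean-zero projections, and combining the Neumann--Poincar\'e inequality \eqr{e:np} with the Caccioppoli inequality \eqr{e:rp} to show the evaluation map into $\RR^N$ is injective. Your argument replaces step (ii) with the \emph{trace bound} $\Tr(I_R\mid I_{2R})\leq C_1(n)$: orthonormalize in $L^2(B_{2R})$, identify $\sum_i u_i(x)^2$ with the maximum of $u(x)^2$ over unit vectors, and apply the Li--Schoen mean value inequality plus Bishop--Gromov \emph{once} to control this reproducing kernel pointwise, independent of $k$; and it replaces step (i) with a Gram-determinant version: Hadamard's inequality plus polynomial growth bounds $f(R)=\det[\int_{B_R}v_iv_j]$ from above, so the monotone function $f$ cannot double too fast at every scale, producing radii where $\prod_i\lambda_i(R)\geq 2^{-(2d+n)k-1}$. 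Pitting the trace bound against the determinant lower bound, with $\lambda_i\leq 1$, gives $\dim K\leq 1+2C_1(n)\,4^{2d+n}$.

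This is the mean-value-inequality route, essentially the one used in \cite{CM6} for harmonic sections (and in P.~Li's independent treatment), rather than the Poincar\'e--Caccioppoli--covering route of \cite{CM4} sketched in the text. The two are genuinely parallel: both need doubling plus a Poincar\'e-type hypothesis (the mean value inequality follows from doubling + Neumann--Poincar\'e by Moser iteration or \cite{Gr}, \cite{SC}, so your proof also generalizes beyond $\Ric\geq 0$ to the doubling--Poincar\'e setting discussed after Theorem~\ref{t:cm14}, with the Bishop--Gromov exponent $n$ replaced by $\log_2$ of the doubling constant). The trace formulation buys a cleaner, shorter step (ii), at the expense of only controlling a product of growth factors rather than a uniform subspace bound; both give bounds exponential in $d$. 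As you note, the sharp $C\,d^{n-1}$ of Theorem~\ref{t:cm15} requires the more delicate annulus-by-annulus analysis of \cite{CM5}, which neither your determinant step nor the sketch in Section~4 attempts.
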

  
 This conjecture generated many significant developments and was discussed by many authors.  See, for instance:
   page $117$ in \cite{Ya3}, problem $48$ in \cite{Ya4}, Conjecture $2.5$  in \cite{Sc}, \cite{Ka1}, \cite{Ka2}, \cite{Kz}, \cite{DF}, Conjecture $1$ in \cite{Li1}, and problem (1) in \cite{LiTa1}, amongst others.
  
  \vskip4mm
  The conjecture  was settled in \cite{CM4}:
  
  \begin{Thm}  \label{t:cm14}
\cite{CM4} Conjecture \ref{c:yau} holds.  
\end{Thm}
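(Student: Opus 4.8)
The plan is to prove a more quantitative statement from which finite-dimensionality follows: for a manifold $M^n$ with $\Ric \geq 0$, the dimension of $\cH_d(M)$ is bounded by a constant depending only on $n$ and $d$ (in fact polynomially in $d$, but for the conjecture any bound suffices). The key realization is that harmonic functions of polynomial growth on a space with good volume control behave, at a coarse scale, like polynomials on $\RR^n$, and so one should be able to count them by a dimension-counting/covering argument. Concretely, I would fix $u_1,\dots,u_K \in \cH_d(M)$ linearly independent and try to show $K \leq C(n,d)$.

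First I would set up the right inner products. For each scale $R$, let $\langle f,g\rangle_R = \int_{B_R(p)} fg$, and consider the associated quadratic form on the finite-dimensional space $V = \mathrm{span}(u_1,\dots,u_K)$. The two ingredients from the hypothesis $\Ric \geq 0$ that make everything work are: (i) the Bishop--Gromov volume comparison, which gives that $\Vol(B_{2R})/\Vol(B_R)$ is bounded (at least along a subsequence of scales, or after passing to a tangent cone), and (ii) the mean value inequality and Cheng--Yau gradient estimate for harmonic functions, which control $\sup_{B_R}|u|$ by the $L^2$ average on $B_{2R}$ and control $\sup_{B_R}|\nabla u|$ by $R^{-1}\sup_{B_{2R}}|u|$. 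Together with the polynomial growth bound, (ii) lets one pass freely between sup-norms and $L^2$-norms on balls at the cost of constants depending only on $n$, and it makes the quadratic forms $\langle\cdot,\cdot\rangle_R$ comparable at nearby scales.

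The heart of the argument is a \emph{mean value type inequality for the trace}: one shows that for a space $V$ of harmonic functions, $\mathrm{tr}$ of the form $\langle\cdot,\cdot\rangle_R$ restricted to $V$, measured against $\langle\cdot,\cdot\rangle_{2R}$, grows at most like $(\text{const})\cdot (\dim V)$-many modes each of size $\sim 2^{-(\text{something})}$ — more precisely, if $K = \dim V$, then at suitable scales $\mathrm{tr}_{\langle\cdot,\cdot\rangle_{2R}}\langle\cdot,\cdot\rangle_R \geq c(n) K^{-1}$, while polynomial growth forces this trace to decay. Iterating this monotonicity-type estimate across a sequence of doublings of the radius, and feeding in that each $u_i$ grows at most polynomially of degree $d$, pins $K$ below an explicit function of $n$ and $d$. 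This step — finding the correct almost-monotone quantity built from the pair of scales and showing it is controlled both above (by polynomial growth) and below (by $\dim V$, using an $L^2$ orthogonalization on one ball) — is where the real work lies, and it is the main obstacle: one must avoid any reliance on an exact volume-doubling constant (which need not be uniform in $R$) and instead run the argument on a well-chosen sequence of scales or extract it from a limit.

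Finally, I would assemble the pieces: choose the scales so Bishop--Gromov gives usable volume ratios, apply the trace estimate iteratively, and conclude $\dim \cH_d(M) \leq C(n,d) < \infty$, which is exactly Conjecture \ref{c:yau}. The same scheme in fact yields the sharp-order bound $\dim\cH_d(M) \leq C(n)\, d^{n-1}$, matching the Euclidean case, although for the stated theorem only finiteness is needed.
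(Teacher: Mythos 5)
Your high-level strategy --- compare the quadratic forms $\langle\cdot,\cdot\rangle_R$ on a candidate subspace at nested scales, use polynomial growth to control the comparison, then invoke local elliptic estimates --- is the right skeleton, but the central quantitative claim as you state it does not close. A lower bound of the form $\mathrm{tr}_{\langle\cdot,\cdot\rangle_{2R}}\langle\cdot,\cdot\rangle_R \geq c(n)K^{-1}$ that shrinks as $K = \dim V$ grows cannot, on its own, bound $K$ from above, and polynomial growth does not make this trace ``decay''; what it gives, after a pigeonhole over scales, is a subspace and a scale at which the $L^2$-doubling ratio is uniformly bounded. That pigeonhole is the first of the two independent steps in \cite{CM4}: for any $2k$-dimensional $H \subset \cH_d(M)$ and $h\in(0,1]$ there is a $k$-dimensional $K \subset H$ and an $R>0$ with $\int_{B_{(1+h)^2R}}v^2 \leq C(1+h)^{8d}\int_{B_R}v^2$ for all $v \in K\setminus\{0\}$, and this step uses no harmonicity at all. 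The second step --- where your sketch is fuzziest --- is a dimension bound for such a $K$: cover $B_R$ by $N\leq C_n r^{-n}$ balls $B_r(x_i)$ with bounded overlap, map each $v\in K$ to the vector $\bigl(\int_{B_r(x_i)}v\bigr)_i \in \RR^N$, and prove injectivity for $r$ small by chaining the Poincar\'e inequality on each small ball, the Caccioppoli (reverse Poincar\'e) inequality for harmonic functions, and the $L^2$-doubling control from step one.

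Your proposed substitution of the mean value inequality and Cheng--Yau gradient estimate for the Poincar\'e/Caccioppoli covering argument is a legitimate alternative route to step two --- it is closer in spirit to the trace-kernel argument of \cite{CM5} that produces the sharp $d^{n-1}$ bound --- but the inequality you identify as the heart of the matter is misstated, and the pigeonhole that actually selects the good subspace and scale is only gestured at, though it carries most of the weight. One more small point: under $\Ric\geq 0$, Bishop--Gromov gives a volume-doubling constant that is uniform in $R$, so your worry about non-uniform doubling of ambient volume is unfounded; the quantity that genuinely need not double uniformly is $\int_{B_R}v^2$ for $v$ in your space, and producing a subspace where it does is precisely the content of the pigeonhole.
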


In fact, \cite{CM4}  proved finite dimensionality under much weaker assumptions of:
\begin{enumerate}
\item  A volume doubling bound.
\item A scale-invariant Poincar\'e inequality.  
\end{enumerate}
Both (1) and (2) hold for $\Ric \geq 0$ by the Bishop-Gromov volume comparison and \cite{B}.  However, these properties
  do not require much regularity of the space and are quite flexible.  In particular, they make sense for more general metric-measure spaces and are preserved by bi-Lipschitz changes of the metric.  Moreover, the properties (1) and (2) make sense also for discrete spaces, vastly extending the theory and methods out of the continuous world.  This extension opens up   applications to geometric group theory and discrete mathematics; some of which we will touch upon later.

\section{Harmonic polynomials on Euclidean space}

There are two simple ways to understand $\cH_d (\RR^n)$.  The first, which is very special to Euclidean space, uses that the Laplacian commutes with partial derivatives on $\RR^n$.
 The key   is then the
gradient estimate{\footnote{The constant $\sqrt{2n+16}$ is not sharp.}}:
\begin{quote}
\item If $\Delta u = 0$ on $B_{2R} \subset \RR^n$, then 
$$
	\sup_{B_R} |\nabla u| \leq \frac{\sqrt{2n+16}}{R} \, \sup_{B_{2R}} |u| \, .
$$
\end{quote}
Thus, 
  if $|u| \leq C$ on all of $\RR^n$, then $\sup_{B_R} |\nabla u| \leq \frac{C\, \sqrt{2n+16}}{R}$ for all $R$.  Letting $R \to \infty$ gives that $u$ is constant.
 Since $\partial_{x_i} \Delta u = \Delta (\partial_{x_i} u)$ on $\RR^n$, the gradient estimate implies that if $u \in \cH_d (\RR^n)$, then $\frac{\partial u}{\partial x_i} \in \cH_{d-1} (\RR^n)$.  Applying this $d$ times gives that the $d$-th order partials are constant and, thus, $u$ is a polynomial of degree $d$.
 It follows that $\cH_d (\RR^n)$ 
 is the space of harmonic polynomials of degree at most $d$ and, thus, 
has dimension   of the order $d^{n-1}$.  
 
There is another way to think of this that gives a more general perspective.  Namely, 
in polar
coordinates $(\rho , \theta) \in \RR^+ \times \SS^{n-1}$,   the Laplacian is
\begin{equation}
\Delta_{\RR^n}     = \rho^{-2} \Delta_{\theta}
        + (n-1)\,\rho^{-1} \frac{\partial}{\partial \rho}
                + \frac{\partial^2}{\partial \rho^2}  \, .
\end{equation}
In particular, the restriction of a homogeneous harmonic
polynomial of degree $d$ to $\SS^{n-1}$ gives an eigenfunction
with eigenvalue $d^2 + (n-2)d$.  

  A similar ``cone construction'' holds more generally; cf. \cite{CM2}.
 Given a  manifold $N^{n-1}$,  the {\it cone} over $N$ is
 the manifold $C(N) = N \times [0,\infty)$ with the metric
\begin{equation}
    ds_{C(N)}^2 = dr^2 + r^2 \, ds^2_N \, .
\end{equation}
  The Laplacians of $N$ and $C(N)$
are related by 
\begin{equation}                \label{e:sepv}
        \Delta_{C(N)} u   = r^{-2} \Delta_N
                u
        + (n-1)\,r^{-1} \frac{\partial}{\partial r} u
                + \frac{\partial^2}{\partial r^2} u \, .
\end{equation}
Using \eqr{e:sepv}, we can now reinterpret the spaces $\cH_d
(C(N))$:

\begin{Lem}   \label{l:equiv}
  If $\Delta_N g = - \lambda \, g$   on $N^{n-1}$, then
  $r^d \, g \in \cH_d (C(N))$ where
  \begin{equation}
    d^2 +       (n-2) d = \lambda \, .
\end{equation}
\end{Lem}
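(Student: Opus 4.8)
The plan is to verify the lemma by a direct substitution into the separation-of-variables identity \eqr{e:sepv}, since $r^d g$ is precisely the kind of product one expects to solve $\Delta_{C(N)} u = 0$. First I would write $u = r^d\, g$, viewing $g = g(x)$ as a function on $C(N) = N \times [0,\infty)$ that does not depend on the radial variable $r$. Then $\Delta_N$ commutes with multiplication by $r^d$, so $\Delta_N u = r^d \Delta_N g = -\lambda\, r^d g$, while the radial derivatives are immediate: $\partial_r u = d\, r^{d-1} g$ and $\partial_r^2 u = d(d-1)\, r^{d-2} g$.

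Feeding these three expressions into \eqr{e:sepv} gives
\begin{equation*}
\Delta_{C(N)} u = \bigl[ -\lambda + (n-1)d + d(d-1) \bigr] r^{d-2} g = \bigl[ d^2 + (n-2)d - \lambda \bigr] r^{d-2} g \, ,
\end{equation*}
and the bracket vanishes identically exactly because of the hypothesis $d^2 + (n-2)d = \lambda$. Hence $u$ is harmonic on $C(N)$ (away from the vertex, where for $d \ge 1$ it extends continuously by $0$).

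It then remains to confirm the polynomial growth. Fixing a basepoint $p = (x_0, 1)$ and projecting any path in $C(N)$ onto its radial factor yields $\dist_{C(N)}\bigl((x,s),p\bigr) \ge |s-1|$, so $B_R(p) \subseteq \{ r \le 1+R\}$ and therefore $\sup_{B_R(p)} |u| \le (1+R)^d \sup_N |g|$. Taking $N$ compact, the eigenfunction $g$ is bounded, so one may set $C_u = \sup_N |g| < \infty$; this gives $r^d g \in \cH_d(C(N))$.

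There is really no hard step here: the statement is a bookkeeping identity resting on \eqr{e:sepv}. The only points that warrant a moment's care are the commutation $\Delta_N(r^d g) = r^d \Delta_N g$ (immediate once one observes that $g$ is $r$-independent), the algebraic identity $(n-1)d + d(d-1) = d^2 + (n-2)d$, and the choice of an honest basepoint for the growth bound together with the boundedness of $g$ (automatic when $N$ is compact).
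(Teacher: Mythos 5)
Your proof is correct and follows exactly the route the paper intends: the paper presents Lemma \ref{l:equiv} as an immediate consequence of the separation-of-variables identity \eqr{e:sepv}, and your direct substitution of $u = r^d g$ into \eqr{e:sepv}, together with the algebra $(n-1)d + d(d-1) = d^2 + (n-2)d$ and the growth bound from projecting to the radial coordinate, is precisely the intended verification.
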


As a consequence of  Lemma \ref{l:equiv},  the spectral properties
of $N$ are equivalent to properties of harmonic functions of
polynomial growth on   $C(N)$.  In this way, the dimension bounds on $\cH_d$, for $d$ large, are related to spectral asymptotics on the cross-section which are given by Weyl's asymptotic formula.  This point of view was a focus point of \cite{CM5}.

      \section{Laplacian on a manifold}
 
 On a Riemannian manifold $M$ with metric $\langle \cdot , \cdot \rangle$ and Levi-Civita connection $\nabla$, the gradient of a function $f$ is defined by
 \begin{equation}
 	V(f) = \langle \nabla f , V \rangle {\text{ for all vectors fields }} V \, .
\end{equation}
The  Laplacian of $f$ is the trace of the Hessian.  That is, if $e_i$ is an orthonormal frame for $M$, then
 \begin{equation}
 	\Delta f  = \Tr \, \Hess_f =  \sum_i \Hess_f (e_i , e_i) = \sum_i \langle \nabla_{e_i} V , e_i \rangle \, .
\end{equation}

  For harmonic functions,  we get the following reverse Poincar\'e inequality (also sometimes called the Caccioppoli inequality):
  
 \begin{Lem}[Reverse Poincar\'e]	\label{l:rp}
 If $\Delta u =0$ on $B_{2R} \subset M$, then
 \begin{equation}
 	\int_{B_R} |\nabla u|^2 \leq  \frac{4}{R^2} \, \int_{B_{2R}} u^2 \, .
 \end{equation}
\end{Lem}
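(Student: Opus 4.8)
The plan is to prove the reverse Poincar\'e (Caccioppoli) inequality by the standard integration-by-parts argument with a logarithmic cutoff — or rather, with the simplest Lipschitz cutoff adapted to the two balls. Let $\varphi$ be a cutoff function that is $1$ on $B_R$, vanishes outside $B_{2R}$, and satisfies $|\nabla \varphi| \leq 1/R$; one takes, for instance, $\varphi(x) = \eta(\dist(p,x))$ where $\eta$ is the piecewise-linear function equal to $1$ on $[0,R]$, equal to $0$ on $[2R,\infty)$, and linear in between, so that $\varphi$ is Lipschitz with the stated gradient bound and is an admissible test function.

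First I would compute $\int_M \langle \nabla u, \nabla(\varphi^2 u)\rangle$. Since $\Delta u = 0$ on $B_{2R}$ and $\varphi^2 u$ is supported there, integration by parts gives $\int \langle \nabla u, \nabla(\varphi^2 u)\rangle = -\int \varphi^2 u\, \Delta u = 0$. Next I would expand the left-hand side using the product rule, $\nabla(\varphi^2 u) = \varphi^2 \nabla u + 2 u\varphi\, \nabla\varphi$, to obtain
\begin{equation}
\int \varphi^2 |\nabla u|^2 = -2 \int u\, \varphi\, \langle \nabla u, \nabla \varphi\rangle \, .
\end{equation}
Then I would estimate the right-hand side by Cauchy-Schwarz followed by the absorbing (Peter-Paul) inequality $2ab \leq \frac{1}{2}a^2 + 2b^2$ with $a = \varphi|\nabla u|$ and $b = |u|\,|\nabla\varphi|$:
\begin{equation}
2\int |u|\,\varphi\,|\nabla u|\,|\nabla\varphi| \leq \frac12 \int \varphi^2 |\nabla u|^2 + 2 \int u^2 |\nabla\varphi|^2 \, .
\end{equation}
Absorbing the first term on the right into the left-hand side of the previous identity leaves $\frac12 \int \varphi^2 |\nabla u|^2 \leq 2\int u^2 |\nabla\varphi|^2$, hence $\int \varphi^2 |\nabla u|^2 \leq 4 \int u^2 |\nabla\varphi|^2$. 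Finally I would use $\varphi \equiv 1$ on $B_R$, $|\nabla\varphi| \leq 1/R$, and $\supp \nabla\varphi \subset B_{2R}$ to conclude $\int_{B_R} |\nabla u|^2 \leq \frac{4}{R^2}\int_{B_{2R}} u^2$, which is exactly the claimed bound with the stated constant $4$.

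There is no serious obstacle here; the only point requiring a little care is the justification of the integration by parts with a merely Lipschitz (not smooth) cutoff, which is handled either by a routine mollification of $\eta$ or by noting that the divergence theorem holds for $W^{1,2}$ test functions against the harmonic $u$ on the relatively compact set $B_{2R}$. One should also note that the constant is not optimal — a cleverer choice of cutoff profile improves it — but the value $4$ suffices for the dimension estimates to follow.
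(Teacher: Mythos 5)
Your proof is correct and is the standard Caccioppoli argument: test the weak form of $\Delta u=0$ against $\varphi^2 u$, expand, and absorb the cross term with the weighted Young inequality $2ab\le \tfrac12 a^2+2b^2$, which produces exactly the constant $4$ once $|\nabla\varphi|\le 1/R$. The paper states Lemma \ref{l:rp} without giving a proof (treating it as a well-known fact), so there is nothing to compare against; your argument, including the remark on justifying the integration by parts with a Lipschitz cutoff, is the expected one.
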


Incidentally, Yau used this in \cite{Ya2} to show:

  \begin{Thm}[Yau]
If $M$ is open, $u$ is harmonic, and $\int u^2 < \infty$, then $u$ must be constant.  If $M$ has infinite volume, then $u \equiv 0$.
\end{Thm}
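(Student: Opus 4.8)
The plan is to deduce both statements from the Reverse Poincar\'e inequality (Lemma~\ref{l:rp}) by an exhaustion argument. First I would fix a point $p \in M$ and for each $R > 0$ choose a cutoff function $\varphi_R$ that is $1$ on $B_R(p)$, supported in $B_{2R}(p)$, with $|\nabla \varphi_R| \leq 2/R$. Applying the reverse Poincar\'e inequality (or, more precisely, the Caccioppoli estimate obtained by testing $\Delta u = 0$ against $\varphi_R^2 u$) gives
\begin{equation}
	\int_{B_R(p)} |\nabla u|^2 \leq \frac{4}{R^2} \int_{B_{2R}(p)} u^2 \leq \frac{4}{R^2} \int_M u^2 \, .
\end{equation}
Since $\int_M u^2 < \infty$ by hypothesis, letting $R \to \infty$ forces $\int_M |\nabla u|^2 = 0$, hence $|\nabla u| \equiv 0$ and $u$ is constant. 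This handles the first claim.

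For the second claim, suppose in addition that $M$ has infinite volume. We already know $u \equiv c$ for some constant $c$. Then $\int_M u^2 = c^2 \, \Vol(M)$, and this is finite by hypothesis; since $\Vol(M) = \infty$, we must have $c = 0$, i.e. $u \equiv 0$. So this part is essentially immediate once the first part is in hand.

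The only genuine subtlety — the step I would flag as the main point to get right — is the passage from the stated form of Lemma~\ref{l:rp} (which concerns a fixed ball $B_{2R}$) to a global estimate valid as $R \to \infty$; this requires that $M$ be complete so that the balls $B_R(p)$ exhaust $M$, and that the right-hand side be bounded by the global integral $\int_M u^2$ uniformly in $R$. One should also note that no curvature or doubling hypothesis is needed here: completeness of $M$ plus the integrability $\int_M u^2 < \infty$ is all that enters. I would state the exhaustion step carefully and then the rest follows by the two short limiting arguments above.
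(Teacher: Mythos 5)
Your proof is correct and follows exactly the line the paper indicates: the paper does not spell out the argument, but it places the Reverse Poincar\'e inequality (Lemma~\ref{l:rp}) immediately before the statement and says Yau used it, which is precisely your exhaustion argument sending $R\to\infty$ to kill $\int_M |\nabla u|^2$, followed by the trivial observation that a nonzero constant has infinite $L^2$ norm on a manifold of infinite volume. Your remark that ``open'' must be read as complete (and noncompact) so that the balls $B_R(p)$ exhaust $M$ is the right point to flag, and no curvature hypothesis is needed, as you note.
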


\subsection{The Bochner formula}

On Euclidean space $\RR^n$, partial derivatives commute and 
\begin{equation}	\label{e:tribo}
	\frac{1}{2} \, \Delta_{\RR^n} |\nabla u|^2 = \left|\Hess_u \right|^2 + \langle \nabla u , \nabla \Delta_{\RR^n} u \rangle \, .
\end{equation}
On a Riemannian manifold $M$,  derivatives do not commute and \eqr{e:tribo} does not hold in general.  However, the failure of derivatives to commute is measured by the Riemann curvature tensor.    Using this, 
Bochner proved the following extremely useful formula:
\begin{equation}
\frac{1}{2}\Delta |\nabla u|^2 =|\Hess_u|^2+ \langle \nabla u , \nabla \Delta_{M} u \rangle  +\Ric_M(\nabla
u,\nabla u)\, .
\end{equation}
Thus, if $\Ric_M \geq 0$,  then the energy density of a harmonic function is subharmonic.

\section{Gradient estimate for harmonic functions}

Gradient estimates  have played a key role in geometry and PDE 
since at least the early work of Bernstein in the 1910s.  These are probably
the most fundamental a priori estimates for elliptic and parabolic
equations, leading to Harnack inequalities, Liouville theorems,
and compactness theorems for both linear and nonlinear PDE.

 A typical example for
linear equations is the well--known, and highly influential, gradient estimate of S.Y.
Cheng and S.T. Yau for harmonic functions:

\begin{Thm}  \cite{CgYa}   \label{t:cy}
If $\Delta u = 0 $   on  $B_R(0)$ with nonnegative Ricci
curvature, then
\begin{equation}    \label{e:cy}
    |\nabla u|(0) \leq \frac{\sqrt{2\,n+16}}{ R} \, \, 
   \sup_{B_R} |u|  \, .
\end{equation}
 \end{Thm}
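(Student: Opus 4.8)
The plan is to derive a differential inequality for $u$ (or more precisely for a carefully chosen function of $u$ and $|\nabla u|$) from the Bochner formula, and then feed it into a local maximum principle argument with a suitable cutoff function. First, since gradient estimates are scale-invariant in the right form, I would reduce to $R=1$ by rescaling, so it suffices to bound $|\nabla u|(0)$ in terms of $\sup_{B_1}|u|$ when $\Delta u = 0$ on $B_1(0)$ and $\Ric_M \geq 0$. By adding a constant and scaling $u$, one may also normalize so that $0 \le u < 1$ on $B_1$; this will be convenient since the Cheng–Yau technique applies to $\log(1-u)$ or similar. Actually, the cleanest route for the $\sup|u|$ version is to work directly with $w = |\nabla u|^2$ and use that, by Bochner with $\Delta u = 0$ and $\Ric \ge 0$,
\begin{equation}
\tfrac12 \Delta w = |\Hess_u|^2 + \Ric_M(\nabla u, \nabla u) \ge |\Hess_u|^2 \, .
\end{equation}
The key extra ingredient is the improved Kato-type inequality: differentiating $|\nabla u|$ and using that $\Delta u = 0$ gives $|\Hess_u|^2 \ge \frac{n}{n-1}|\nabla |\nabla u||^2$, which quantitatively strengthens the subharmonicity of $w$ into something usable.

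The second step is the cutoff argument. Let $\varphi$ be a cutoff that is $1$ on $B_{1/2}$, supported in $B_1$, with $|\nabla \varphi|$ and $|\Hess_\varphi|$ controlled; because we have no lower Ricci bound beyond $\ge 0$ we cannot invoke the Laplacian comparison for the distance function directly in a naive way, but one can choose $\varphi = \eta(r)$ with $\eta$ smooth and use that $\Delta r \le (n-1)/r$ from $\Ric \ge 0$, so $\Delta \varphi$ is bounded above in terms of $\eta', \eta''$. Then consider the function $G = \varphi^2 |\nabla u|^2$ (one may need a power of $(M^2 - u^2)$ or similar weight, where $M = \sup_{B_1}|u|$, to absorb the inhomogeneous terms) and locate its interior maximum point $x_0$. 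At $x_0$ one has $\nabla G = 0$ and $\Delta G \le 0$; expanding these two relations, substituting the Bochner/Kato inequality for $\tfrac12\Delta|\nabla u|^2$, and using Cauchy–Schwarz to handle the cross terms $\langle \nabla \varphi^2, \nabla |\nabla u|^2\rangle$, yields a quadratic inequality in $|\nabla u|(x_0)$ whose coefficients involve only $n$, the cutoff derivatives, and $M$. Solving this quadratic bounds $G(x_0)$, hence $|\nabla u|^2$ on $B_{1/2}$, and then one more rescaling/covering step (or simply tracking the constant through $B_{1/2}$ versus $B_1$) produces the explicit constant; the value $\sqrt{2n+16}$ comes from optimizing the choices of $\eta$ and the Cauchy–Schwarz splitting, though as the footnote notes it is not sharp.

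The main obstacle is the bookkeeping at the maximum point: the inhomogeneous nature of the $\sup|u|$ estimate (as opposed to the cleaner estimate for positive harmonic functions, where one works with $\log u$ and gets a homogeneous inequality) means one must introduce an auxiliary weight like $(2M^2 - u^2)$ or carry $u^2$ terms through the Bochner computation, and the cross terms generated by $\Delta(u^2) = 2|\nabla u|^2$ must be balanced delicately against $|\Hess_u|^2$ using the Kato improvement — if the Kato refinement $\frac{n}{n-1}$ is not used, the argument fails to close, or closes with a much worse constant. A secondary technical point is ensuring the cutoff $\varphi$ can be taken smooth despite the distance function being only Lipschitz; this is handled by the standard device of replacing $r$ by a smooth function comparable to it, or by working with $\varphi$ as a function of $r$ and interpreting $\Delta \varphi$ in the barrier (support) sense, which is enough for the maximum principle.
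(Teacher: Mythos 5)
Your broad plan --- Bochner, a cutoff, a maximum principle --- is in the right spirit, but it contains a genuine misstatement and is more elaborate than the paper's route. You assert that without the improved Kato inequality $|\Hess_u|^2 \geq \frac{n}{n-1}|\nabla|\nabla u||^2$ ``the argument fails to close, or closes with a much worse constant.'' This is not so. The paper's proof uses only the ordinary Kato/Cauchy--Schwarz bound $|\nabla|\nabla u|^2| \leq 2|\nabla u|\,|\Hess_u|$, and absorbs the resulting cross term into the good term $2\eta^2|\Hess_u|^2$ coming from Bochner via the elementary inequality $2a^2 - 16ab \geq -32b^2$. The Hessian is not discarded from Bochner: it is retained precisely to swallow the cross term generated by the cutoff, so there is nothing left that would require a Kato refinement. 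Similarly, your proposed normalization $0 \le u < 1$ and detour through $\log(1-u)$ belong to the argument for the sharper differential Harnack estimate (Theorem~\ref{t:cy2}), not to this $\sup|u|$ version, and your ``locate the interior maximum of $G=\varphi^2|\nabla u|^2$, use $\nabla G = 0$, $\Delta G \leq 0$, and solve a quadratic'' is again the scheme of the $\log u$ proof; it can be made to work here but is heavier than necessary.

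The paper's proof is simpler and more self-contained than what you sketch. Instead of a cutoff that is $1$ on $B_{1/2}$ and $0$ outside $B_1$, it takes $\eta = R^2 - r^2$, which vanishes exactly on $\partial B_R$; Laplacian comparison ($\Ric \geq 0$) gives $\Delta\eta^2 \geq -4nR^2$ and $|\nabla\eta^2| \leq 4R\eta$. Then the Bochner computation plus absorption shows $\Delta(\eta^2|\nabla u|^2) \geq -(4n+32)R^2|\nabla u|^2$, and since $\Delta u^2 = 2|\nabla u|^2$, the function $w = (2n+16)R^2 u^2 + \eta^2|\nabla u|^2$ is subharmonic on $B_R$. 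The maximum principle forces $\max_{B_R} w$ to occur on $\partial B_R$, where $\eta = 0$ kills the gradient term, so $R^4|\nabla u|^2(0) \leq w(0) \leq (2n+16)R^2\max_{\partial B_R} u^2$, giving \eqref{e:cy} directly without any interior maximum-point bookkeeping, covering step, or auxiliary weight of the form $(M^2-u^2)$. Your worry about smoothness of the distance function is legitimate but handled exactly as you suggest in your final sentence: $\Delta\eta$ is understood in the barrier sense, which suffices for the maximum principle.
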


\begin{proof}  
We begin by introducing a cutoff function $\eta$ that vanishes on $\partial B_R$ and has bounds on its gradient and Laplacian.
Define the cutoff function $\eta (x) = R^2 - r^2$, where $r$ is the distance function to the center $0$ of the ball.  Observe that
\begin{align}
	\nabla \eta &= |2\,r \, \nabla r| \leq 2\, R \, ,  \\
	|\nabla \eta^2|  &= 2\, \eta \, |\nabla \eta| \leq 4 \, R \, \eta \, , \\
	\Delta \eta &= - \Delta r^2 \geq - 2\,n \, ,  \\
	\Delta \eta^2 &= 2\, |\nabla \eta |^2 + 2\, \eta \, \Delta \eta \geq - 4\,n \eta \geq - 4\,n \, R^2 \, , 
\end{align}
where the third line used the Laplacian comparison theorem (which applies since $\Ric \geq 0$).

 Using the product rule the Laplacian, the Bochner formula and the above formulas for $\eta$, 
 we compute that
\begin{align}    \label{e:cy1}
    \Delta (\eta^2 \, |\nabla u|^2)  &= 
    \eta^2 \, \Delta |\nabla u|^2 + 2\, \langle \nabla \eta^2 , \nabla |\nabla u|^2 \rangle + |\nabla u|^2 \, \Delta \eta^2 \notag
    \\
    &\geq    2 \, \eta^2 \, |\Hess_u
    |^2 - 16 \, R \, 
    \, \eta \, |\nabla u| \, |\Hess_u|
    -4\,n\, R^2 \, |\nabla u|^2   \notag \\ &\geq - (4\,n+32) \, R^2 \, |\nabla u|^2 \, ,
\end{align}
where the last inequality used the absorbing inequality $16\,a\,b \leq
2\,a^2 + 32\, b^2$ with $a= \eta \, |\Hess_u|$ and $b= R \, |\nabla u|$.
On the other hand, we have 
$
	\Delta u^2 = 2 \, |\nabla u|^2 \, , 
$
so the function 
\begin{equation}
w = (2\,n+16) \, R^2 \, u^2 +
\eta^2 \, |\nabla u|^2
\end{equation}
 is subharmonic on $B_R(0)$ (i.e., $\Delta
w \geq 0$).  By the maximum principle, the maximum of $w$ occurs
on the boundary so that
\begin{equation}    \label{e:cy2}
     R^4 \, |\nabla u|^2 (0) \leq w(0) \leq \max_{\partial B_R } w =
     (2\,n+16) \, R^2 \, \max_{\partial B_R} u^2  \, .
\end{equation}
\end{proof}

In fact, Cheng and Yau proved a stronger estimate: 

\begin{Thm}[Gradient estimate; \cite{CgYa}] \label{t:cy2}
If $\Delta u = 0$ and 
$u$ is  positive on $B_{R}(0)$ with $\Ric \geq 0$, then
\begin{equation}     \label{e:pge}
   \frac{|\nabla u|}{u} \, (0)=  |\nabla \log u| (0) \leq \frac{4\,n}{R}  \, .
\end{equation}
\end{Thm}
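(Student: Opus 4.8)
The plan is to run the same cutoff--plus--maximum-principle scheme as in the proof of Theorem~\ref{t:cy}, but applied to $v=\log u$ rather than to $u$: the logarithm turns the Bochner inequality into a \emph{superlinear} one, and it is exactly this gain of coercivity that lets one absorb all the error terms and end up with a bound that does not see $\sup u$ at all. First I would set $v=\log u$, which is well defined since $u>0$, and note that $\Delta u=0$ translates into the Riccati-type identity $\Delta v=-|\nabla v|^{2}$. Writing $f=|\nabla v|^{2}$ and applying the Bochner formula to $v$ together with $\Ric_{M}\geq 0$ and the Cauchy--Schwarz (Kato) bound $|\Hess_{v}|^{2}\geq \tfrac1n(\Delta v)^{2}=\tfrac1n f^{2}$ gives
\begin{equation*}
\tfrac12\,\Delta f \;\geq\; \tfrac1n\,f^{2}\;-\;\langle\nabla v,\nabla f\rangle\,.
\end{equation*}
The point to stress is the quadratic term $\tfrac1n f^{2}$ on the right, which plays the role that $\Delta u^{2}=2|\nabla u|^{2}$ played in Theorem~\ref{t:cy}; here it is genuinely nonlinear in $f$, and that is what produces a universal bound.

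Next I would choose a cutoff $\phi$ that is a function of the distance $r$ to $0$, with $\phi(0)=1$, $\phi\equiv 0$ on $\partial B_{R}$, and the two scale-invariant bounds $|\nabla\phi|^{2}\leq C\,\phi/R^{2}$ and $\Delta\phi\geq -C/R^{2}$; the lower bound on $\Delta\phi$ comes, just as in the proof of Theorem~\ref{t:cy}, from the Laplacian comparison theorem, so $\Ric\geq 0$ is used a second time. A convenient profile is $\phi=(1-r^{2}/R^{2})^{2}$. I would then look at $G=\phi f$ at a point $x_{0}\in B_{R}$ where it attains its maximum (after the routine reduction $R'\nearrow R$ to guarantee the maximum is attained): since $\phi$ vanishes on the boundary, either $f\equiv 0$ on $B_{R}$ and $u$ is constant, so the estimate is trivial, or $x_{0}$ is interior with $f(x_{0})>0$. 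At $x_{0}$ one has $\nabla G=0$, i.e. $\phi\,\nabla f=-f\,\nabla\phi$, and $\Delta G\leq 0$. Plugging in the Bochner inequality for $\Delta f$, using the critical-point relation to eliminate $\nabla f$, estimating the cross term by $2|\langle\nabla v,\nabla\phi\rangle|\leq 2f^{1/2}|\nabla\phi|$, and absorbing $f^{1/2}|\nabla\phi|$ and $|\nabla\phi|^{2}/\phi$ (after multiplying through by $\phi$) into the coercive term by Young's inequality, collapses the whole thing to a linear inequality $\tfrac1n\,G(x_{0})\leq C(n)/R^{2}$. Evaluating at the center and using $\phi(0)=1$ gives $f(0)=G(0)\leq G(x_{0})\leq C(n)/R^{2}$, hence $|\nabla\log u|(0)\leq \sqrt{C(n)}/R$; a careful bookkeeping of constants, optimizing the Young parameter (and, if one insists on the value $4n$, using the refined Kato inequality that keeps the trace-free part of $\Hess_{v}$ tested against $\nabla v$), yields $C(n)=16n^{2}$.

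The main obstacle is twofold. The genuine analytic subtlety is that $r$ is only Lipschitz and fails to be smooth at the cut locus of $0$, so both the formula for $\Delta\phi$ and the second-derivative test at $x_{0}$ are a priori valid only away from the cut locus; this is dealt with by Calabi's trick, replacing $r$ near $x_{0}$ by a smooth upper support function built from a fixed minimizing geodesic from $0$ to $x_{0}$, which leaves the needed inequalities at $x_{0}$ intact. The second, more clerical, difficulty is squeezing out the clean constant $4n$: the crude bound $|\Hess_{v}|^{2}\geq\tfrac1n(\Delta v)^{2}$ combined with a generic cutoff loses a constant factor, so one must either sharpen the Bochner step via the refined Kato inequality or tune the profile $\phi$ and the absorption constants together. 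Neither issue changes the structure of the argument, which is simply the proof of Theorem~\ref{t:cy} applied to $\log u$ with the extra quadratic gain exploited.
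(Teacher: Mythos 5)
Your proposal is correct, and the first thing to note is that the paper does not actually prove Theorem~\ref{t:cy2}: it proves the weaker $L^\infty$ version, Theorem~\ref{t:cy}, and then states the positive-function estimate with a citation to \cite{CgYa}. What you have written is, modulo constant-chasing, the original Cheng--Yau argument. You correctly identify the two structural changes relative to the proof of Theorem~\ref{t:cy} given in the paper: (i) passing to $v=\log u$, which replaces harmonicity by the Riccati identity $\Delta v=-|\nabla v|^2$ and makes the trace term $\tfrac1n(\Delta v)^2=\tfrac1n f^2$ in Bochner genuinely quadratic in $f=|\nabla v|^2$; and (ii) exploiting that coercivity to close the estimate at an \emph{interior} maximum of $\phi f$, so no boundary data (and hence no $\sup u$) ever enters --- in contrast with the proof of Theorem~\ref{t:cy}, where one adds a multiple of $R^2u^2$ to make the localized quantity subharmonic and then evaluates on $\partial B_R$. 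Your handling of the cut locus via Calabi's upper support function and the reduction $R'\nearrow R$ (needed since $f$ may blow up at $\partial B_R$) are both exactly the right fixes, and the dichotomy ``$f\equiv 0$ or the maximum is interior with $f(x_0)>0$'' is correctly stated.

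Two small clerical remarks. First, the term $|\nabla\phi|^2/\phi$ is already $O(R^{-2})$ by the scale invariance of your profile and needs no absorption; only the cross term $f^{1/2}|\nabla\phi|$ requires Young. Second, as you anticipate, the crude bound $|\Hess_v|^2\geq\tfrac1n(\Delta v)^2$ together with $\phi=(1-r^2/R^2)^2$ lands at roughly $|\nabla\log u|(0)\leq\sqrt{20\,n^2+32\,n}\,/R$ (run the maximum-point inequality $\tfrac2n G\leq 8G^{1/2}/R+(32+4n)/R^2$ and optimize the Young parameter), which for large $n$ is about $4.47\,n/R$ rather than $4n/R$; recovering the stated constant does require the refined Hessian lower bound or a sharper cutoff, so your caveat on the constant is warranted rather than merely cautious. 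Also a terminology note: $|\Hess_v|^2\geq\tfrac1n(\Delta v)^2$ is Cauchy--Schwarz on the eigenvalues of $\Hess_v$, not Kato; Kato is $|\nabla|\nabla v||\leq|\Hess_v|$. None of this affects the correctness or the structure of the argument, which is sound.
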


 An important
consequence of \eqr{e:pge} is the Harnack inequality for positive harmonic
functions.  Accordingly, estimates of the form \eqr{e:pge}, which give a bound for the derivative of the logarithm of a positive function, are often referred to as differential Harnack estimates.
In 1986, Li and Yau proved a sharp gradient, or differential Harnack, estimate for the heat equation on manifolds with $\Ric \geq 0$,  \cite{LiY}.    The paper \cite{C2} gives a sharp elliptic gradient estimate on such manifolds.  Finally, note that the Harnack inequality holds more generally for manifolds with a volume doubling and Poincar\'e inequality by \cite{Gr}, \cite{SC}.

\section{Harmonic functions with polynomial growth on general spaces}

The Cheng-Yau gradient estimate  implies the global Liouville theorem of
Yau, \cite{Ya3}, by taking $R \to \infty$ in \eqr{e:pge}.
In fact,  it implies the stronger result that any harmonic function with sublinear growth
must be constant:

\begin{Cor}
\cite{CgYa} If $M$ is complete with $\Ric_M \geq 0$,
then $\cH_d (M) = \{ {\text{Constant functions}} \}$ for  $d<1$.
\end{Cor}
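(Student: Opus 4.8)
The plan is to deduce the corollary directly from the Cheng--Yau gradient estimate \eqr{e:pge}, or rather from its consequence \eqr{e:cy} for sign-changing functions, combined with the growth hypothesis. Suppose $u \in \cH_d(M)$ with $d < 1$, so that $\Delta u = 0$ and there is a point $p$ and a constant $C_u$ with $\sup_{B_R(p)} |u| \le C_u\,(1+R)^d$ for all $R$. The goal is to show $\nabla u \equiv 0$, which forces $u$ to be constant since $M$ is connected.

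First I would fix an arbitrary point $q \in M$ and, for each $R > 0$, apply Theorem \ref{t:cy} on the ball $B_R(q)$ (which has $\Ric \ge 0$, being a ball in $M$). This gives
\begin{equation}
    |\nabla u|(q) \le \frac{\sqrt{2n+16}}{R}\,\sup_{B_R(q)} |u| \, .
\end{equation}
Next I would control $\sup_{B_R(q)} |u|$ in terms of the polynomial growth centered at $p$: if $\dist(p,q) = D$, then $B_R(q) \subset B_{R+D}(p)$, so $\sup_{B_R(q)}|u| \le C_u\,(1 + R + D)^d$. Substituting,
\begin{equation}
    |\nabla u|(q) \le \frac{\sqrt{2n+16}\,C_u\,(1+R+D)^d}{R} \, .
\end{equation}
Since $d < 1$, the right-hand side is $O(R^{d-1}) \to 0$ as $R \to \infty$. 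Hence $|\nabla u|(q) = 0$, and since $q$ was arbitrary, $u$ is constant on $M$. Conversely, constant functions obviously lie in $\cH_d(M)$ for every $d \ge 0$, so $\cH_d(M) = \{\text{constant functions}\}$ when $d < 1$.

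There is essentially no obstacle here: the only point requiring a word of care is that Theorem \ref{t:cy} as stated bounds $|\nabla u|$ at the \emph{center} of the ball, so one must re-center the estimate at each point $q$ and absorb the extra distance $D = \dist(p,q)$ into the polynomial growth bound, which is harmless because a fixed additive shift does not change the growth rate. (Alternatively, since $\cH_d$ is defined with a fixed but arbitrary base point $p$, one may simply take $q = p$ if one only wants the statement for functions whose growth is measured at $p$; the re-centering argument shows the conclusion is independent of that choice.) Everything then follows by letting $R \to \infty$.
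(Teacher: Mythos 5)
Your proof is correct and follows essentially the same route the paper indicates: let $R \to \infty$ in a Cheng--Yau-type gradient estimate. The paper's one-sentence argument points at the differential Harnack estimate \eqr{e:pge} (which requires positivity), whereas you use the sup-norm version \eqr{e:cy}, which applies directly to a sign-changing $u \in \cH_d$ without the extra normalization needed to make $u$ positive on each ball; otherwise the two arguments are the same, and your re-centering remark correctly handles the fact that the estimate is pointwise at the ball's center.
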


Since $\RR^n$ has nonnegative Ricci curvature and the coordinate
functions are harmonic, this is obviously sharp. Therefore, when
$d\geq 1$ a different approach is needed.  Instead of
showing a Liouville theorem, the point is to control the size of
the space of solutions.  Over the years, there were many
interesting partial results (including   when $M$ is a
surface, \cite{LiTa2} and
\cite{DF}).  In \cite{LiTa1}, P. Li and L.F. Tam
obtained the borderline case $d=1$, showing that
\begin{equation}    \label{e:linearg}
    \dim (\cH_1 (M)) \leq n+1 \, ,
\end{equation}
 for an $n$-dimensional manifold with $\Ric_M \geq 0$. When $M=\RR^n$ the space $\cH_1 (\RR^n)$ has dimension  $n+1$ and is spanned by the $n$ coordinate functions plus
the constant functions.  The corresponding rigidity theorem was
proven in \cite{ChCM} (see \cite{Li2} for the special case where
$M$ is K\"ahler):

\begin{Thm}     \label{t:chcm}
\cite{ChCM}
 If $M$ is complete with $\Ric_M \geq 0$, then every
tangent cone at infinity $M_{\infty}$ splits isometrically as
\begin{equation}
    M_{\infty} = N \times \RR^{ \dim (\cH_1 (M)) - 1 } \, .
\end{equation}
  Hence, if $\dim (\cH_1 (M)) = n+1$, then \cite{C1} implies
that $M = \RR^n$.
\end{Thm}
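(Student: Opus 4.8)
The plan is to use the Cheng--Yau gradient estimate together with the Bochner formula to show that every linear growth harmonic function on $M$ has an ``asymptotically parallel'' gradient, and then to feed this into the almost splitting theorem of Cheeger--Colding to produce the Euclidean factor in an arbitrary tangent cone at infinity.

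Set $k = \dim(\cH_1(M)) - 1$, which is finite by \eqr{e:linearg}. Fix a sequence $R_i \to \infty$ realizing a given tangent cone at infinity $M_\infty$ as the pointed Gromov--Hausdorff limit of the rescaled manifolds $(M, R_i^{-2}\langle\cdot,\cdot\rangle, p)$; such limits exist by Gromov's precompactness theorem, since rescaling preserves $\Ric \geq 0$. On the $k$-dimensional space $\cH_1(M)$ modulo constants I would consider the rescaled energy forms $\langle u,v\rangle_R = \frac{1}{\Vol(B_R)}\int_{B_R}\langle\nabla u,\nabla v\rangle$, pass to a further subsequence so that $\langle\cdot,\cdot\rangle_{R_i}$ converges to a positive definite form --- here one uses that a nonconstant linear growth harmonic function on a manifold with $\Ric \geq 0$ has positive asymptotic energy density --- and pick $u_1,\dots,u_k$ orthonormal with respect to the limiting form. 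By Theorem \ref{t:cy} each $u_j$ satisfies $\sup_M|\nabla u_j| < \infty$, so the normalized functions $R_i^{-1}u_j$ are uniformly bounded in $C^1$ on unit balls of the rescaled manifolds; by elliptic estimates and the convergence theory for spaces with a lower Ricci bound, after passing to a subsequence they converge to harmonic functions $b_1,\dots,b_k$ on $M_\infty$ with bounded gradient.

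The key step is to show that the $b_j$ have parallel gradient and satisfy $\langle\nabla b_i,\nabla b_j\rangle \equiv \delta_{ij}$. For this I would multiply the Bochner formula by a cutoff supported in $B_{2R}$ and integrate: since $u_j$ is harmonic and $\Ric \geq 0$, dropping the Ricci term and absorbing the cross term exactly as in the proof of Lemma \ref{l:rp} gives
\[
	\int_{B_R}|\Hess_{u_j}|^2 \leq \frac{C}{R^2}\int_{B_{2R}}|\nabla u_j|^2 \leq \frac{C'}{R^2}\,\Vol(B_{2R}) \,,
\]
so that $\frac{1}{\Vol(B_R)}\int_{B_R}|\Hess_{u_j}|^2 \leq C'' R^{-2} \to 0$ by Bishop--Gromov. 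Passing to the limit, $\Hess_{b_j} \equiv 0$ on $M_\infty$; hence $|\nabla b_j|$ and the pairings $\langle\nabla b_i,\nabla b_j\rangle$ are constant on the connected space $M_\infty$, and the (scale invariant) normalization of the $u_j$ forces these constants to be $\delta_{ij}$. In particular the $b_j$ are nonconstant and mutually ``orthogonal''.

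Finally, a function with parallel gradient of unit norm on a limit space with $\Ric \geq 0$ yields an isometric splitting of an $\RR$ factor --- this is the rigidity content of the Cheeger--Colding almost splitting theorem applied to $b_j$ --- and iterating over $b_1,\dots,b_k$ splits $M_\infty = N \times \RR^k$ with $k = \dim(\cH_1(M)) - 1$, as claimed; since $R_i$ was arbitrary, every tangent cone at infinity splits this way. If moreover $\dim(\cH_1(M)) = n+1$, then $k = n$, and since $M_\infty$ has Hausdorff dimension $n$ the factor $N$ must be a point, so $M_\infty = \RR^n$; then \cite{C1} --- volume cone implies metric cone, together with volume rigidity --- upgrades this to $M = \RR^n$. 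The main obstacle throughout is the limit analysis: making precise the passage to the tangent cone, controlling the limits of the normalized harmonic functions in a sufficiently strong topology, and upgrading the average smallness of $|\Hess_{u_j}|^2$ into an honest isometric splitting of the a priori singular space $M_\infty$. This is exactly where the Cheeger--Colding structure theory for Ricci limit spaces is indispensable, and it is the technical heart of \cite{ChCM}.
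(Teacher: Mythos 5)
The paper states this theorem only as a citation to \cite{ChCM} and gives no proof of its own, so the comparison is against the argument in that reference; your reconstruction follows it in essence --- normalize a basis of $\cH_1(M)$ modulo constants by the limiting scale-invariant Dirichlet energies, use Bochner together with the reverse Poincar\'e inequality and Bishop--Gromov to get that $\frac{1}{\Vol(B_R)}\int_{B_R}|\Hess_{u_j}|^2 \to 0$, and feed this into the Cheeger--Colding (almost) splitting machinery to split the tangent cone at infinity. Two small points worth tightening: the claim that a non-constant $u\in\cH_1(M)$ has positive asymptotic energy density, and that the scale-invariant pairings actually have a limit rather than merely subsequential limits, rests on the subharmonicity of $|\nabla u|^2$ (Bochner plus $\Ric\ge 0$) and deserves a line of justification; and the result from \cite{C1} invoked in the final step is the volume convergence theorem (combined with Bishop--Gromov rigidity), not ``volume cone implies metric cone,'' which is from the Cheeger--Colding almost rigidity paper \cite{ChC}.
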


Finally, in \cite{CM4},  Yau's conjecture from 1974 was settled.   Namely, \cite{CM4} showed that the spaces of polynomial growth harmonic
functions are finite dimensional; see Theorem \ref{t:cm14}.
 The proof  
consists of  two independent steps (the first does not use
harmonicity):
\begin{itemize} \item
Given a $2k$-dimensional subspace $H \subset \cH_d (M)$ and $h \in
(0,1]$, there exists a $k$-dimensional subspace $K \subset H$ and
$R> 0$ so that
\begin{equation}        \label{e:step1}
    \sup_{  v \in K \setminus \{ 0\} } \, \, \frac{\int_{ B_{(1+h)^2 R} }
    v^2}{\int_{ B_{R} }
    v^2} \leq C_1 \, (1+h)^{8d} \, .
\end{equation}
  \item
The dimension of a space $K$ of harmonic functions
satisfying \eqr{e:step1} is bounded in terms of $h$ and $d$.
\end{itemize}

To give some feel for the argument, we will sketch a proof of the
second step.

\begin{proof}(Sketch of second step)
 For simplicity, suppose that $R=1$ and $h=1$.  Fix a scale $r \in (0,1)$
 to be chosen small.
  We will use two properties of manifolds with $\Ric_M \geq 0$:

   First,
  we can find $N \leq
 C_n
 \, r^{-n}$  balls $B_r(x_i)$ with
 \begin{equation}       \label{e:vd}
    \chi_{B_1} \leq \sum_i \chi_{ B_{r} (x_i) } \leq C_n \,
    \chi_{B_2} \, ,
 \end{equation}
where $\chi_E$ is the characteristic function of a set $E$. (To do
this,  choose a maximal disjoint collection of balls of radius
$r/2$ and then use the volume comparison to get the second
inequality in \eqr{e:vd} and bound   $N$.)

Second, there is a uniform Poincar\'e inequality: If
$\int_{B_s(x)} f = 0$, then
\begin{equation}    \label{e:np}
    \int_{B_s(x)} f^2 \leq C_N \, s^2  \int_{B_s(x)} |\nabla f|^2
    \, .
\end{equation}

To bound the dimension of $K$, we will  construct a linear map
$\cM : K \to \RR^N$  and show that $\cM$ is injective for $r>0$
sufficiently small.  We define $\cM$ by
\begin{equation}
    \cM (v) = \left( \int_{B_{r}(x_1)} v \, ,  \cdots , \int_{B_{r}(x_N)}
    v  \right) \, .
\end{equation}
We will deduce a contradiction if $v \in K \setminus \{ 0 \}$ is
in the kernel of $\cM$. In particular, \eqr{e:np} gives that for
each $i$
\begin{equation}    \label{e:np2}
    \int_{B_r(x_i)} v^2 \leq C_N \, r^2  \int_{B_r(x_i)} |\nabla v|^2
    \, .
\end{equation}
Combining this with \eqr{e:vd} gives
\begin{equation}    \label{e:np3}
    \int_{B_1} v^2 \leq \sum_{i=1}^N \int_{B_r(x_i)} v^2 \leq C_N \, r^2 \sum_{i=1}^N \int_{B_r(x_i)} |\nabla v|^2
    \leq C_n \, C_N \, r^2 \,  \int_{B_2} |\nabla v|^2
    \, .
\end{equation}
We now (for the only time) use that $v$ is harmonic.  Namely, the
Caccioppoli inequality (or reverse Poincar\'e inequality) for
harmonic functions gives
\begin{equation}    \label{e:rp}
    \int_{B_2} |\nabla v|^2 \leq \int_{B_4} v^2 \, .
\end{equation}
Combining \eqr{e:np3} and \eqr{e:rp}, we get
\begin{equation}    \label{e:np4}
    \int_{B_1} v^2 \leq  C_n \, C_N \, r^2 \,  \int_{B_4} v^2
    \, .
\end{equation}
This contradicts \eqr{e:step1} if $r$ is sufficiently small,
completing the proof.
\end{proof}

On Euclidean space $\RR^n$, the spaces $\cH_d$ are given by
harmonic polynomials of degree at most $d$.  In particular, it is
not hard to see that
\begin{equation}    \label{e:ruppern}
    \dim (\cH_d (\RR^n)) \approx
    C \, d^{n-1} \, .
\end{equation}
 Using the correspondence between
harmonic polynomials and eigenfunctions on $\SS^{n-1}$ (see Lemma
\ref{l:equiv}), this
 is   closely related to Weyl's asymptotic formula on $\SS^{n-1}$.
 In \cite{CM5}, the authors proved a
similar sharp polynomial bound for manifolds with non--negative
Ricci curvature:

\begin{Thm}     \label{t:cm15}
\cite{CM5}  If $M^n$ is complete with $\Ric_M \geq 0$ and $d\geq
1$, then
\begin{equation}    \label{e:cm15}
    \dim ( \cH_d (M)) \leq C \, d^{n-1} \, .
\end{equation}
\end{Thm}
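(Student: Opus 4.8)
The plan is to run the two--step scheme behind Theorem~\ref{t:cm14}, but with the auxiliary parameter $h$ in \eqr{e:step1} chosen of size $\asymp 1/d$, so that the growth factor $(1+h)^{8d}$ there collapses to a dimensional constant and the whole problem becomes a geometric counting problem at a single scale. Concretely, given a finite--dimensional $H\subset\cH_d(M)$ with $\dim H=2k$, apply \eqr{e:step1} with $h=1/d$ to produce a $k$--dimensional $K\subset H$ and a radius $R$ with $\int_{B_{(1+h)^2R}}v^2\le C_1(1+h)^{8d}\int_{B_R}v^2\le C_2\int_{B_R}v^2$ for all $v\in K$, where $C_2=C_1e^8$ is dimensional. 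Applying \eqr{e:step1} a bounded number of times at scales filling $[R,2R]$ and intersecting the resulting subspaces (each step costs only a fixed fraction of the dimension), I may also assume $\int_{B_{\beta R}}v^2\le C_2\int_{B_R}v^2$ for all $\beta\in[1,1+10h]$ and all $v$ in the resulting subspace, still called $K$; note this step does not use harmonicity.

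Next, the counting. Take a basis $u_1,\dots,u_N$ of $K$ orthonormal in $L^2(B_R)$ and put $G=\sum_iu_i^2$, which is subharmonic since $\Delta G=2\sum_i|\nabla u_i|^2\ge0$. For each unit $a\in\RR^N$, $v_a:=\sum_ia_iu_i\in K$ has $\int_{B_R}v_a^2=1$, so $\int_{B_{(1+h)R}}v_a^2\le C_2$; since $v_a^2$ is subharmonic and $\Ric\ge0$ gives volume doubling and a Poincar\'e inequality, the mean--value (Moser) inequality yields $v_a(x)^2\le C_n\,\Vol(B_{hR/2}(x))^{-1}\int_{B_{(1+h)R}}v_a^2$ for $x\in B_R$. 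Taking the supremum over $a$ turns the left side into $G(x)$, and Bishop--Gromov ($\Vol B_{hR/2}(x)\ge c_nh^n\Vol B_R$) then gives $G\le C_nC_2h^{-n}\Vol(B_R)^{-1}$ on $B_R$; integrating, $N=\int_{B_R}G\le C_nC_2h^{-n}=C_n'd^{\,n}$, hence $\dim H\le C_n''d^{\,n}$. This already gives a polynomial bound, but with the wrong exponent.

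The hard part is gaining the last factor of $d$. The sharp exponent $n-1$ is the one dictated by Weyl's law on an $(n-1)$--dimensional cross--section: via Lemma~\ref{l:equiv}, $\dim\cH_d$ should behave like the number of eigenvalues $\le d^2$ on the link, i.e.\ $\asymp(d^2)^{(n-1)/2}=d^{n-1}$, the missing power of $d$ being the radial/frequency direction. The natural way to recover it is to normalize not by the volume of an $n$--ball $B_{hR}(x)$ but by that of the thin annulus $A=B_{(1+h)R}\setminus B_{(1-h)R}$, which has $\Vol A\asymp h\,\Vol B_R$ and can be covered by only $\le C_nh^{-(n-1)}=C_nd^{n-1}$ balls of radius $\asymp hR$; one would then try to show the map sending $v\in K$ to its averages over those balls is injective on a subspace of almost full dimension. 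The genuine obstruction is that, by Caccioppoli (the annular version of Lemma~\ref{l:rp}) and the output of Step~1, a kernel element only satisfies $\int_Av^2\le C_n\int_{B_{(1+4h)R}}v^2\le C_nC_2\int_{B_R}v^2$, which is vacuous unless one also knows $\int_Av^2\ge c_n\int_{B_R}v^2$ --- that $v$ leaves a definite fraction of its $B_R$--mass on the $O(1/d)$--thin annulus. This holds for $v$ of ``degree close to $d$'' at scale $R$ but fails badly for elements of $K$ of much smaller effective degree, so one must decompose $K$ according to the local doubling index (an Almgren--type frequency at scale $R$) and carry out the annular count in each frequency band, or use unique continuation / monotonicity of the frequency for harmonic functions to push interior mass out to $A$ with only a bounded loss. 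That refinement is the core of the matter; granting it, the argument closes and yields \eqr{e:cm15}.
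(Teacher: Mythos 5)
Your argument as written produces only the weaker bound $\dim(\cH_d)\le C\,d^{\,n}$; the passage to the sharp exponent $n-1$ is not actually proved, and you say as much yourself ("granting it, the argument closes"). The missing lemma is exactly the one you isolate: for the subspace $K$ and scale $R$ produced by the pigeonhole, one needs a \emph{lower} bound $\int_{A}v^2\ge c_n\int_{B_R}v^2$ on the thin annulus $A=B_{(1+h)R}\setminus B_{(1-h)R}$, $h\asymp 1/d$, for every $v\in K$, and step~1 by itself only furnishes an \emph{upper} bound on the doubling ratio $\int_{B_{(1+h)^2R}}v^2/\int_{B_R}v^2$. An upper bound on growth does not prevent low--frequency elements of $K$ (e.g.\ the constant, or linear functions) from having essentially all their $L^2(B_R)$ mass in the interior and only an $O(h)$ fraction on $A$, and these are the elements that ruin the annular count. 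The proposed fixes --- splitting $K$ by a local Almgren--type frequency and counting in each band, or pushing interior mass to the annulus by frequency monotonicity --- are the right circle of ideas (and close to what is actually done in \cite{CM5}, which selects the subspace and scale so as to pin down the frequency, establishes the annular mass bound, and then runs the covering argument you describe), but they are stated, not carried out, and carrying them out is where all the work lies.

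A secondary issue: to arrange $\int_{B_{\beta R}}v^2\le C_2\int_{B_R}v^2$ simultaneously for all $\beta\in[1,1+10h]$ you propose applying \eqr{e:step1} several times and intersecting; but each application of \eqr{e:step1} produces its own pigeonholed radius $R$, and there is no reason the radii from different applications land in a common window, so the intersection need not inherit control at a single fixed scale. This can be repaired (one can pigeonhole once over a finer net of ratios), but as stated the reduction is not justified.

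Everything up to the $d^n$ bound is sound: $h\asymp 1/d$ collapses $(1+h)^{8d}$ to a dimensional constant, the subharmonicity of $G=\sum u_i^2$ plus the Moser mean--value inequality plus Bishop--Gromov give $\sup_{B_R}G\le C\,h^{-n}\Vol(B_R)^{-1}$, and integrating gives $N\le C\,d^n$. The genuine gap is the annular concentration needed to trade $h^{-n}$ for $h^{-(n-1)}$.
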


Taking $M = \RR^n$,   \eqr{e:ruppern}  illustrates that the
exponent $n-1$ is sharp in \eqr{e:cm15}.  However, as in Weyl's
asymptotic formula, the constant in front of $d^{n-1}$ can be
related to the volume. Namely,   we actually showed the stronger
statement
\begin{equation}    \label{e:cm15a}
    \dim ( \cH_d (M)) \leq C_n \, \V_M \, d^{n-1} + o (d^{n-1}) \,
    ,
\end{equation}
where
\begin{itemize} \item $C_n$ depends only on the
dimension $n$. \item
  $\V_M$ is the ``asymptotic volume ratio'' $\lim_{r\to \infty} \,
  \Vol (B_r)/ r^n$. \item $o(d^{n-1})$ is a function of $d$ with
  $\lim_{d\to \infty} \, o(d^{n-1})/d^{n-1} = 0$.
  \end{itemize}
As noted above, Theorem \ref{t:cm15} also gives lower bounds for
eigenvalues on a manifold $N^{n-1}$ with $\Ric_N \geq (n-2) =
\Ric_{\SS^{n-1}}$.  Using the sharper estimate \eqr{e:cm15a}
introduces the volume of $N$ into these eigenvalue estimates (as
predicted by Weyl's asymptotic formula).

An interesting  feature of these dimension estimates is that they
follow from ``rough'' properties of $M$ and are therefore
surprisingly stable under perturbation. For instance, in
\cite{CM4}, we proved Theorem \ref{t:cm14} for manifolds
with a volume doubling and a Poincar\'e inequality; unlike
a Ricci curvature bound, these properties are stable under
bi--Lipschitz transformations.

This finite dimensionality was not previously known even for
manifolds bi--Lipschitz to $\RR^n$ (except under additional
hypotheses, cf.  Avellenada--Lin, \cite{AvLn}, and
Moser--Struwe, \cite{MrSt}).

\vskip2mm
The volume doubling and  Poincar\'e inequality together imply 
  a meanvalue inequality.  Using the meanvalue inequality and the doubling, we prove  finite dimensionality for harmonic sections of certain bundles in \cite{CM6} (see also \cite{CM3}).

\vskip2mm
This is just a very brief overview (omitting many interesting
results), but we hope that it gives something of the flavor of the
subject; see
 \cite{CM3} and references
therein for more.

\section{The heat equation}

A function $u$ satisfies the heat equation if $u_t = \Delta u$.  In particular, harmonic functions are static (time-independent) solutions of the heat equation.

\subsection{Ancient solutions of the heat equation}

The natural parabolic generalization of a polynomial growth harmonic function is a polynomial growth ancient solution of the heat equation.  On $\RR^n$, it is classical that these are just polynomials in $x$ and $t$ and, thus, finite dimensional.  In view of \cite{CM4}, \cite{CM5} and \cite{CM6} for harmonic functions, it is  natural to seek dimension bounds for these spaces on manifolds.  This was initiated by Calle in  2006 in her thesis, \cite{Ca1}, \cite{Ca2}; 
cf. \cite{LZ} for some recent results, including a parabolic generalization of \cite{CM4}.

Given $d>0$, let $\cP_d (\RR^n)$ be the space of ancient solutions $u(x,t)$ of the heat equation $\partial_t u = \Delta\, u$ so that there exists $C_u$ with
\begin{align}
	\sup_{ B_R \times [-R^2,0]}\, |u| \leq C_u \, \left( 1 + R \right)^d \, .
\end{align}

\begin{Thm}	\label{t:CMPd1}
\cite{CM11}
If   $\Vol (B_R(p)) \leq C \, (1+R)^{d_V}$ for some $p \in M$, then for $1\leq k \in \ZZ$
\begin{align}
	\dim \cP_{2k}(M) \leq (k+1) \, \dim \cH_{2k}(M) \, .
\end{align}
\end{Thm}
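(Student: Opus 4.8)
The plan is to show that every polynomial-growth ancient solution on $M$ is automatically a polynomial in $t$ of degree at most $k$ whose leading coefficient is a polynomial-growth harmonic function, and then to obtain the dimension bound from a filtration of $\cP_{2k}(M)$ by degree in $t$.

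First I would establish the parabolic Caccioppoli (reverse Poincar\'e) inequalities for a solution $\partial_t u=\Delta u$: writing $Q_\rho=B_\rho(p)\times[-\rho^2,0]$,
\begin{equation*}
\iint_{Q_\rho}|\nabla u|^2\le \frac{C}{\rho^2}\iint_{Q_{2\rho}}u^2,\qquad
\iint_{Q_\rho}(\partial_t u)^2\le \frac{C}{\rho^2}\iint_{Q_{2\rho}}|\nabla u|^2 .
\end{equation*}
Both follow from the usual device of multiplying by $\phi^2 u$, resp.\ $\phi^2\partial_t u$, for a space-time cutoff $\phi$ and integrating by parts; the boundary contribution at the final time $t=0$ has a favorable sign and is discarded, and no curvature term appears because one integrates $\partial_t(u^2)$, resp.\ $\partial_t|\nabla u|^2$, in time rather than invoking Bochner. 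Since $\partial_t u$ again solves the heat equation, iterating gives $\iint_{Q_\rho}(\partial_t^{\,j}u)^2\le C_j\,\rho^{-4j}\iint_{Q_{4^j\rho}}u^2$. If $u\in\cP_{2k}(M)$ and $\Vol(B_R(p))\le C(1+R)^{d_V}$, then $\iint_{Q_R}u^2\le R^2\,\Vol(B_R(p))\,\sup_{Q_R}|u|^2\le C'R^{\,2+d_V+4k}$, and therefore $\iint_{Q_\rho}(\partial_t^{\,j}u)^2\le C''\rho^{\,2+d_V+4k-4j}$. For $j$ with $4j>2+d_V+4k$ the left side is nondecreasing in $\rho$ yet tends to $0$, so it vanishes identically; hence $\partial_t^{\,j}u\equiv0$ and $u(x,\cdot)$ is a polynomial in $t$.

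Now write $u(x,t)=\sum_{i=0}^{N}t^i a_i(x)$ with $a_N\not\equiv0$. Substituting into the equation gives $(i+1)a_{i+1}=\Delta a_i$ for $i<N$ and $\Delta a_N=0$, so the leading coefficient is harmonic; and viewing $\tau\mapsto u(x,-\tau)$ as a polynomial of degree $N$ bounded by $C_u(1+R)^{2k}$ on $[0,R^2]$ for $x\in B_R(p)$, the elementary estimate on polynomial coefficients in terms of the sup-norm on an interval yields $\sup_{B_R(p)}|a_i|\le C(1+R)^{2(k-i)}$. If $N>k$ then $a_N$ would be a harmonic function with strictly negative growth rate, hence $a_N\equiv0$, a contradiction; so $N\le k$. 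Finally, filter $\cP_{2k}(M)$ by $\cP_{2k}^{(j)}=\{u:\partial_t^{\,j+1}u\equiv0\}$, so $\cP_{2k}(M)=\cP_{2k}^{(k)}\supseteq\cdots\supseteq\cP_{2k}^{(0)}\cong\cH_{2k}(M)$ (the static solutions). On $\cP_{2k}^{(j)}$ the map $u\mapsto\tfrac1{j!}(\partial_t^{\,j}u)(\cdot,0)=a_j$ lands in $\cH_{2(k-j)}(M)$ and has kernel exactly $\cP_{2k}^{(j-1)}$ (if $a_j\equiv0$ and $\partial_t^{\,j+1}u\equiv0$ then $\partial_t^{\,j}u\equiv0$). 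Thus $\dim\big(\cP_{2k}^{(j)}/\cP_{2k}^{(j-1)}\big)\le\dim\cH_{2(k-j)}(M)$, and summing,
\begin{equation*}
\dim\cP_{2k}(M)\le\sum_{j=0}^{k}\dim\cH_{2(k-j)}(M)=\sum_{i=0}^{k}\dim\cH_{2i}(M)\le(k+1)\dim\cH_{2k}(M),
\end{equation*}
since $\cH_{2i}(M)\subseteq\cH_{2k}(M)$ for $i\le k$.

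The main obstacle is the first step — that a polynomial-growth ancient solution must be polynomial in $t$ — since the remainder is formal linear algebra. It is also exactly where the hypothesis enters: the volume bound is what forces the iterated energies $\iint_{Q_\rho}(\partial_t^{\,j}u)^2$ to grow slower than every positive power of $\rho$ once $j$ is large, and the argument needs the parabolic Caccioppoli estimates in a form where the final-time boundary terms are under control.
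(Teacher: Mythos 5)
The paper does not give a proof of this theorem; it is quoted from the preprint \cite{CM11} (``Optimal bounds for caloric functions''), so there is no in-text argument to compare against. Judged on its own, your proposal is correct and complete, and it follows what is almost certainly the intended line of reasoning: the factor $(k+1)$ in the statement is exactly the length of the filtration by $t$-degree that you construct, and the whole point of the theorem is to reduce ancient caloric functions to harmonic ones via the polynomial structure in $t$.

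A few points worth confirming, since they are the places a reader would pause. The two parabolic Caccioppoli inequalities go through as you describe: after multiplying by $\phi^2 u$ (resp.\ $\phi^2\partial_t u$) and integrating by parts in space and then in time, the only boundary term is $-\tfrac12\int\phi^2 u^2(\cdot,0)$ (resp.\ $-\tfrac12\int\phi^2|\nabla u|^2(\cdot,0)$), which has the right sign, and no Bochner formula or curvature hypothesis is needed. The bootstrap $\iint_{Q_\rho}(\partial_t^{\,j}u)^2\le C_j\,\rho^{-4j}\iint_{Q_{4^j\rho}}u^2$ then combines with the volume hypothesis through $\iint_{Q_R}u^2\le R^2\,\Vol(B_R(p))\,\sup_{Q_R}u^2$, and the monotonicity-in-$\rho$ argument correctly forces $\partial_t^{\,j}u\equiv 0$ for $j$ large depending on $d_V$ and $k$, so $u$ is polynomial in $t$. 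The coefficient estimate $|a_i(x)|\le C(N)\,\sup_{[0,R^2]}|u(x,-\cdot)|/R^{2i}$ (equivalence of norms on the finite-dimensional space of degree-$\le N$ polynomials, rescaled to $[0,1]$) gives the growth $\sup_{B_R}|a_i|\le C(1+R)^{2(k-i)}$ for $R\ge1$, and harmonicity of the top coefficient (from $(i+1)a_{i+1}=\Delta a_i$ with $a_{N+1}=0$) together with a strictly negative growth exponent when $N>k$ forces $a_N\equiv 0$; hence $N\le k$, independently of $d_V$. The filtration $\cP_{2k}^{(0)}\subset\cdots\subset\cP_{2k}^{(k)}=\cP_{2k}(M)$ with $u\mapsto a_j\in\cH_{2(k-j)}(M)\subseteq\cH_{2k}(M)$ having kernel $\cP_{2k}^{(j-1)}$ then gives $\dim\cP_{2k}(M)\le\sum_{i=0}^k\dim\cH_{2i}(M)\le(k+1)\dim\cH_{2k}(M)$. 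In fact your argument yields the slightly stronger bound $\sum_{i=0}^k\dim\cH_{2i}(M)$, which is consistent with the cited result being a simplified upper bound.
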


Combining this
with the bound $   \dim \cH_d (M) \leq C \, d^{n-1}$ when  $\Ric_{M^n} \geq 0$  from \cite{CM5} gives:
 
\begin{Cor}	\label{t:CMPd}
\cite{CM11}
There exists $C=C(n)$ so that if   $\Ric_{M^n} \geq 0$, then for $d\geq 1$
\begin{align}	\label{e:cmpd}
	\dim \, \cP_d (M) \leq C \, d^n \, .
\end{align}
\end{Cor}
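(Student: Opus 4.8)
The plan is to derive Corollary \ref{t:CMPd} by simply feeding the sharp harmonic dimension bound of \cite{CM5} into the parabolic-to-elliptic reduction of Theorem \ref{t:CMPd1}. First I would check that the hypotheses of Theorem \ref{t:CMPd1} are met when $\Ric_{M^n} \geq 0$: the Bishop--Gromov volume comparison gives $\Vol(B_R(p)) \leq C_n\,(1+R)^n$, so we may take $d_V = n$ in the volume growth assumption, and this holds for every $p \in M$. Hence Theorem \ref{t:CMPd1} applies with $d = 2k$ for every integer $k \geq 1$, yielding
\begin{equation}
    \dim \cP_{2k}(M) \leq (k+1)\, \dim \cH_{2k}(M) \, .
\end{equation}

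Next I would invoke Theorem \ref{t:cm15}: since $\Ric_{M^n} \geq 0$ and $2k \geq 1$, we have $\dim \cH_{2k}(M) \leq C\,(2k)^{n-1} = C\,2^{n-1}\,k^{n-1}$ with $C = C(n)$. Combining the two bounds,
\begin{equation}
    \dim \cP_{2k}(M) \leq (k+1)\, C\, 2^{n-1}\, k^{n-1} \leq C'\, k^n \, ,
\end{equation}
for a new constant $C' = C'(n)$, using $k+1 \leq 2k$ for $k \geq 1$. This proves the even case $d = 2k$.

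To pass from even integers to all $d \geq 1$, I would use monotonicity of $\cP_d$ in $d$: if $d \leq d'$ then $\cP_d(M) \subseteq \cP_{d'}(M)$, so $\dim \cP_d(M)$ is nondecreasing in $d$. Given any $d \geq 1$, pick the integer $k$ with $2k - 2 < d \leq 2k$, i.e. $k \leq d/2 + 1 \leq d$ (the last inequality since $d \geq 2$; for $1 \leq d < 2$ we simply take $k = 1$ and note $d \geq 1$ makes $k \leq 2 \leq 2d$ harmless up to adjusting the constant). Then $\dim \cP_d(M) \leq \dim \cP_{2k}(M) \leq C'\, k^n \leq C''\, d^n$, which is \eqref{e:cmpd}.

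The argument is essentially bookkeeping once Theorems \ref{t:CMPd1} and \ref{t:cm15} are in hand, so there is no serious obstacle; the only mild subtlety is the interpolation from even integers to arbitrary real $d \geq 1$, which is handled cleanly by the trivial inclusion $\cP_d \subseteq \cP_{d'}$ for $d \leq d'$. One should also take a moment to confirm that the constant produced depends only on $n$ and not on $M$ — this is immediate because both the volume growth exponent $d_V = n$ and the constant $C$ in $\dim \cH_{2k}(M) \leq C\,(2k)^{n-1}$ depend only on $n$, and the factor $(k+1)$ from Theorem \ref{t:CMPd1} is universal.
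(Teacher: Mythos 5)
Your proposal is correct and follows exactly the paper's route: the paper derives the corollary precisely by plugging the bound $\dim\cH_d(M)\leq C\,d^{n-1}$ from Theorem \ref{t:cm15} into Theorem \ref{t:CMPd1}, after noting that Bishop--Gromov supplies the polynomial volume growth hypothesis. Your careful interpolation from even integers $2k$ to general $d\geq 1$ via the inclusion $\cP_d\subseteq\cP_{d'}$ is the same bookkeeping the paper leaves implicit.
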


 This result is optimal in the following sense:  There is a constant $c=c_n$  so that for $d\geq 1$
\begin{align}	\label{e:cpdrn}
	c^{-1} \, d^n \leq \dim \cP_d (\RR^n) \leq c \, d^n \, .
\end{align}
Thus, 
 the exponent $n$ in \eqr{e:cmpd}   is  sharp; see Lin and Zhang, 
\cite{LZ}, for a recent related result that adapts the methods of \cite{CM4}--\cite{CM6} to get the weaker bound $d^{n+1}$.

 Theorem \ref{t:CMPd1}  gives finite dimensionality of $\cP_d (M)$ for any $M$ where $\cH_d(M)$ is finite dimensional.  Thus, the earlier results of \cite{CM4}--\cite{CM6} give finite dimensionality of $\cP_d (M)$ 
 when $M$ has a volume doubling and either a Poincar\'e or meanvalue inequality.

\subsection{Parabolic gradient estimates}

The next lemma gives a simple interior gradient estimate that parallels the gradient estimate of Theorem \ref{t:cy} for harmonic functions.  One can also get gradient estimates in time, though the scaling factor is different.

\begin{Lem}	\label{l:hge}
If $M$ is complete with $\Ric_M \geq 0$, the there exists $C$ depending on $n$ so that if $(\partial_t -\Delta)\, u=0$ on $B_R \times [-R^2,0]$, then
\begin{align}
	\sup_{B_{R/2} \times [-R^2/4,0]} \, |\nabla u|^2 \leq \frac{C}{R^2} \, \sup_{ B_R \times [-R^2,0]} u^2 \, .
\end{align}
\end{Lem}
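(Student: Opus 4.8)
The plan is to adapt the Cheng--Yau argument from the proof of Theorem \ref{t:cy} to the parabolic setting, using the parabolic Bochner formula together with a space-time cutoff and a maximum principle for the heat operator. First I would record the parabolic Bochner identity: since $(\partial_t - \Delta) u = 0$ implies, after differentiating, that $(\partial_t - \Delta)|\nabla u|^2 = -2|\Hess_u|^2 - 2\Ric_M(\nabla u, \nabla u) \leq -2|\Hess_u|^2$ when $\Ric_M \geq 0$. Thus $|\nabla u|^2$ is a subsolution of the heat equation (its energy density satisfies $(\partial_t - \Delta)|\nabla u|^2 \leq 0$), which is the parabolic analogue of the subharmonicity used earlier. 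Similarly $(\partial_t - \Delta) u^2 = -2|\nabla u|^2$.

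Next I would introduce a space-time cutoff function $\phi = \phi(x,t)$ supported in $B_R \times [-R^2,0]$ and equal to $1$ on $B_{R/2} \times [-R^2/4, 0]$, built from the usual spatial cutoff $\eta(x)$ (with $|\nabla \eta| \leq C/R$, $|\Delta \eta| \leq C/R^2$, using the Laplacian comparison theorem as in the proof of Theorem \ref{t:cy}) multiplied by a smooth function of $t$ that turns on as $t$ moves away from $-R^2$. The key computation is then to apply the heat operator to the test quantity
\begin{equation}
	w = \phi^2 \, |\nabla u|^2 + \Lambda \, R^{-2} \, u^2 \, ,
\end{equation}
for a dimensional constant $\Lambda$ chosen large. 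Expanding $(\partial_t - \Delta)(\phi^2 |\nabla u|^2)$ via the product rule produces the good term $-2\phi^2 |\Hess_u|^2$ from the Bochner inequality, cross terms of the form $\langle \nabla \phi^2, \nabla |\nabla u|^2 \rangle$ bounded by $C |\phi| |\nabla \phi| |\nabla u| |\Hess_u|$, and error terms $|\nabla u|^2 (\partial_t - \Delta)\phi^2$ of size $C R^{-2} |\nabla u|^2$. Absorbing the cross terms into the good Hessian term via the Cauchy--Schwarz inequality $2ab \leq \epsilon a^2 + \epsilon^{-1} b^2$ (exactly as in \eqr{e:cy1}) leaves $(\partial_t - \Delta)(\phi^2 |\nabla u|^2) \leq C R^{-2} |\nabla u|^2$. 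Since $(\partial_t - \Delta)(\Lambda R^{-2} u^2) = -2\Lambda R^{-2} |\nabla u|^2$, choosing $\Lambda$ larger than the constant $C$ makes $(\partial_t - \Delta) w \leq 0$, so $w$ is a subsolution.

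Finally I would invoke the parabolic maximum principle: a subsolution of the heat equation on the space-time cylinder $B_R \times [-R^2, 0]$ attains its maximum on the parabolic boundary, i.e. on $(\partial B_R \times [-R^2,0]) \cup (B_R \times \{-R^2\})$. On that boundary $\phi$ vanishes on the side $\partial B_R$ and is controlled at the bottom time slice, so $w \leq \Lambda R^{-2} \sup_{B_R \times [-R^2,0]} u^2$ there. Hence on $B_{R/2} \times [-R^2/4,0]$, where $\phi = 1$, we get $|\nabla u|^2 \leq w \leq \Lambda R^{-2} \sup_{B_R \times [-R^2,0]} u^2$, which is the claimed estimate with $C = \Lambda$.

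The main obstacle I anticipate is the bookkeeping of the time-derivative error terms coming from $\partial_t \phi^2$: the temporal cutoff contributes a term of size $R^{-2}|\nabla u|^2$ with a sign that is not favorable, and one must be careful that it, together with the spatial $\Delta \phi^2$ errors, is genuinely dominated by the $-2\Lambda R^{-2}|\nabla u|^2$ coming from $\Delta u^2$ — this is what forces $\Lambda$ to be a sufficiently large (but purely dimensional) constant, and it is the one place where the parabolic scaling (time scaling like length squared) must be respected throughout. A secondary technical point is justifying the maximum principle when the cutoff is only Lipschitz rather than smooth (the distance function is not smooth past the cut locus); this is handled, as in the elliptic case, either by Calabi's trick of working with smooth barriers from below or by a routine approximation argument, and does not affect the constant.
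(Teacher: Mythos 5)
Your proposal is correct and follows essentially the same route as the paper: parabolic Bochner inequality for $|\nabla u|^2$ under $\Ric \geq 0$, a space-time cutoff $\psi$ vanishing on the parabolic boundary, Cauchy--Schwarz absorption of the cross term $\langle \nabla \psi^2, \nabla |\nabla u|^2\rangle$ into $-2\psi^2|\Hess_u|^2$, addition of a multiple of $u^2$ (whose heat operator produces $-2|\nabla u|^2$) to make $\psi^2|\nabla u|^2 + \Lambda R^{-2} u^2$ a subsolution, and then the parabolic maximum principle. The only cosmetic difference is that the paper first rescales to $R = 1$ before running the cutoff argument, whereas you carry the $R$-dependence explicitly throughout.
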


\begin{proof}
By scaling, it suffices to prove the estimate when $R=1$.  Let $\psi$ be a cut-off function that is one on $B_{1/2} \times [-1/4,0]$ and zero on the parabolic  boundary of 
$B_1 \times [-1,0]$.  Note that
$(\partial_t - \Delta) u^2  = - 2 \, |\nabla u|^2$ and, by the Bochner formula since $\Ric \geq 0$, 
	$(\partial_t - \Delta) |\nabla u|^2  \leq - 2 \, |\nabla^2 u|^2$.
Therefore, the Kato inequality $|\nabla |\nabla u|| \leq |\nabla^2 u|$ and the absorbing inequality give  
\begin{align}
	(\partial_t - \Delta)\, \left[ \psi^2  |\nabla u|^2 \right] &\leq - 2 \, |\nabla^2 u|^2 + |\nabla u|^2 \, (\partial_t - \Delta) \psi^2 + 8 |u| \, |\nabla u| |\psi| |\nabla \psi| \notag \\
	&\leq |\nabla u|^2 \,\left\{  (\partial_t - \Delta) \psi^2 +  8\,  |\nabla \psi |^2 \right\} \, .
\end{align}
Using the Laplacian comparison theorem, we can construct $\psi$ so that
 $  \left|  (\partial_t - \Delta) \psi^2 +  8\,  |\nabla \psi |^2 \right| \leq C$ for a constant $C$ depending just on $n$.  It follows that $C \, u^2 + |\nabla u|^2 \, \psi^2$ is a subsolution of the heat equation and the parabolic maximum principle gives
\begin{align}
	\sup_{B_{1/2} \times [-1/4,0]} \, |\nabla u|^2 \leq C \, \sup_{ B_1 \times [-1,0]} u^2 \, .
\end{align}
After rescaling to radius $R$, this gives the lemma.
\end{proof}

In \cite{LiY}, Li and Yau proved a gradient estimate for positive solutions of the heat equation:

\begin{Thm}[Differential Harnack inequality; \cite{LiY}] \label{t:LY}
If $\partial_t u = \Delta u = 0$ and 
$u$ is  positive on $M \times [0,\infty )$ with $\Ric \geq 0$, then
\begin{equation}     
  \frac{|\nabla u|^2}{u^2} - \frac{u_t}{u}  \leq \frac{n}{2t}  \, .
\end{equation}
\end{Thm}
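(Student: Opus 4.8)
The plan is to prove the Li--Yau differential Harnack inequality by the standard maximum principle argument applied to the quantity $F = t\left(\frac{|\nabla u|^2}{u^2} - \frac{u_t}{u}\right)$, or equivalently to set $f = \log u$ and study $F = t\left(|\nabla f|^2 - f_t\right)$, and show $F \le \frac{n}{2}$. First I would record that since $u$ is a positive solution of the heat equation, $f = \log u$ satisfies the equation $f_t = \Delta f + |\nabla f|^2$. Then I would compute the evolution of the ``speed'' $w := |\nabla f|^2 - f_t = -\Delta f$ under the heat operator. Using the Bochner formula (available from the excerpt) together with $\Ric_M \ge 0$, one obtains a differential inequality of the form
\begin{equation}
	(\partial_t - \Delta) w \le -2\langle \nabla f, \nabla w\rangle - \frac{2}{n}(\Delta f)^2 = -2\langle \nabla f, \nabla w\rangle - \frac{2}{n}w^2 \, ,
\end{equation}
where the crucial ingredient is the Cauchy--Schwarz estimate $|\Hess_f|^2 \ge \frac{1}{n}(\Tr \Hess_f)^2 = \frac{1}{n}(\Delta f)^2$, and the Ricci term $\Ric_M(\nabla f, \nabla f) \ge 0$ is simply discarded.

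Next I would localize: fix $T > 0$, pick a cutoff function $\varphi$ supported in $B_{2R}(p)$ that is $1$ on $B_R(p)$ with the usual bounds $|\nabla \varphi|^2/\varphi \le C/R^2$ and $|\Delta \varphi| \le C/R^2$ coming from the Laplacian comparison theorem (exactly as in the proof of Theorem~\ref{t:cy} and Lemma~\ref{l:hge}), and consider $G := t\,\varphi\, w$ on $B_{2R}(p) \times [0,T]$. At an interior maximum point $(x_0, t_0)$ of $G$ (assume $G > 0$ there, else we are done), one has $\nabla G = 0$, $\Delta G \le 0$, $G_t \ge 0$; feeding $\nabla(\varphi w) = -\frac{\varphi w}{t}\big|_{\text{via }\nabla G=0}$ style relations into the differential inequality for $w$ and handling the gradient-term $-2\langle \nabla f, \nabla w\rangle$ by absorbing it using Cauchy--Schwarz (the term $|\nabla f|$ is controlled because $w$ itself bounds $|\nabla f|^2$ up to the sign of $f_t$; this is the delicate bookkeeping step), one arrives at an algebraic inequality at $(x_0,t_0)$ of the shape
\begin{equation}
	0 \le t_0\Big( -\tfrac{2}{n}\varphi w^2 + (\text{lower order in }w)\cdot\tfrac{C}{R^2} + \varphi w \cdot \tfrac{C}{R^2}\cdot(\cdots)\Big) - \varphi w\big|_{t_0} \, .
\end{equation}
This is a quadratic inequality in $w(x_0,t_0)$ with leading coefficient $-\frac{2}{n} t_0 \varphi$, so it forces $t_0\,\varphi(x_0)\,w(x_0,t_0) \le \frac{n}{2} + \frac{C\,t_0}{R^2}(1 + \sqrt{t_0}/R + \cdots)$, i.e. $G(x_0,t_0) \le \frac{n}{2} + \frac{C(T, R)}{R^2}$ with the error tending to $0$ as $R \to \infty$.

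Finally, since $G \le G(x_0,t_0)$ everywhere, letting $R \to \infty$ gives $t\,w \le \frac{n}{2}$ on all of $M \times (0,T]$, and since $T$ was arbitrary this is the claimed bound $\frac{|\nabla u|^2}{u^2} - \frac{u_t}{u} \le \frac{n}{2t}$. The main obstacle is the localization step: carefully controlling the cross term $-2\langle\nabla f,\nabla w\rangle$ and the terms involving $\nabla\varphi, \Delta\varphi$ so that, after using the relation $\nabla G = 0$ to replace $\nabla w$ by an expression in $w$, $\nabla\varphi$ and $\nabla f$, everything can be absorbed into the good negative term $-\frac{2}{n}t_0\varphi w^2$ without spoiling the sharp constant $\frac{n}{2}$; one has to be slightly clever (following Li--Yau) about splitting $|\nabla f|^2$ and keeping track of signs so the constant does not degrade. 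The discarding of the nonnegative Ricci term is what makes $\Ric_M \ge 0$ exactly the right hypothesis, and is otherwise harmless.
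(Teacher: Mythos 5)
The paper does not give a proof of Theorem~\ref{t:LY}; it is stated as a citation to Li--Yau \cite{LiY}, so there is no internal argument to compare against. Your outline is the standard Li--Yau proof, and the overall structure (set $f=\log u$, study $w=|\nabla f|^2-f_t=-\Delta f$, Bochner plus $|\Hess_f|^2\geq(\Delta f)^2/n$ and $\Ric\geq 0$, localize, maximum principle, send $R\to\infty$) is the right one. Two points, though, deserve care.

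First, a sign: computing directly from $\partial_t w=-\Delta f_t$, $f_t=\Delta f+|\nabla f|^2$, and the Bochner formula gives
\begin{equation}
(\partial_t-\Delta)w=-2|\Hess_f|^2+2\langle\nabla f,\nabla w\rangle-2\Ric(\nabla f,\nabla f)\leq -\frac{2}{n}w^2+2\langle\nabla f,\nabla w\rangle \, ,
\end{equation}
so the cross term should be $+2\langle\nabla f,\nabla w\rangle$, not $-2\langle\nabla f,\nabla w\rangle$. This is harmless for the argument (at the maximum one replaces $\nabla w$ by a multiple of $w\nabla\varphi/\varphi$, whose inner product with $\nabla f$ has no definite sign anyway), but worth getting right.

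Second, and more substantively, the parenthetical ``$w$ itself bounds $|\nabla f|^2$ up to the sign of $f_t$'' is not correct as stated: $w=|\nabla f|^2-f_t$, and $f_t$ can be positive, so $w$ gives no upper bound on $|\nabla f|^2$. This is exactly the hard point of the localized Li--Yau argument, and it is where the proof is not just routine absorbing. The standard fix (as in \cite{LiY} or Schoen--Yau's \emph{Lectures on Differential Geometry}) is to run the argument with the quantity $F_\alpha=t(|\nabla f|^2-\alpha f_t)$ for $\alpha>1$, which produces a good $+|\nabla f|^2$ term through the decomposition $\Delta f=f_t-|\nabla f|^2$ that lets you absorb the $|\nabla f|\,|\nabla\varphi|$ cross term, and then let $\alpha\to 1$ at the end. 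If you want the $\alpha=1$ argument directly, you need an analogous careful splitting of $(\Delta f)^2$ rather than the naive Cauchy--Schwarz. On a compact manifold this subtlety disappears ($\nabla\varphi\equiv 0$ and the maximum-principle inequality $0\leq F/t-(2/n)F^2/t$ immediately gives $F\leq n/2$), so the localization is precisely where the real content lies. Your proposal correctly flags this as the ``delicate bookkeeping step,'' but the mechanism you propose for controlling $|\nabla f|$ does not work; the $\alpha$-trick (or equivalent) is needed.
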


There is also a local version of Theorem \ref{t:LY} in \cite{LiY} when $u$ is positive on $B_R \times [-R^2,0]$ with $\Ric \geq 0$.  Namely, there exists $C_n$ depending on $n$ so that
\begin{equation}     \label{e:localLY}
  \sup_{B_{\frac{R}{2}} \times [-\frac{R^2}{4} , 0]} \, \, \left( \frac{|\nabla u|^2}{u^2} - \frac{u_t}{u} \right) \leq \frac{C_n}{ R^2}  \, .
\end{equation}

An immediate corollary of \eqr{e:localLY} is a  generalization of Lemma \ref{l:hge}:

\begin{Cor}	\label{c:LY}
There exists $C$ depending on $n$ so that 
if $\partial_t u = \Delta u = 0$ and 
  $\Ric \geq 0$, then
\begin{equation}     \label{e:localLY2}
  \sup_{B_{\frac{R}{2}} \times [-\frac{R^2}{4} , 0]} \, \,  |u_t|   \leq \frac{C}{ R^2} \, \sup_{ B_R \times [-R^2,0] } \, \, |u|  \, .
\end{equation}
Similarly, we have $ \sup_{B_{\frac{R}{2}} \times [-\frac{R^2}{4} , 0]} \, \,  |\nabla u|^2   \leq \frac{C}{ R^2} \, \sup_{ B_R \times [-R^2,0] } \, \, u^2$.
\end{Cor}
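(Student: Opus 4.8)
The plan is to deduce everything from the local Li--Yau estimate \eqr{e:localLY}, which applies only to \emph{positive} solutions, by the familiar device of adding a large constant to make $u$ positive without destroying its size. Assume $u$ solves the heat equation on $B_R\times[-R^2,0]$ and set $A=\sup_{B_R\times[-R^2,0]}|u|$; we may assume $A>0$, since otherwise $u\equiv 0$ and there is nothing to prove. Then $v:=u+2A$ is a positive solution of the heat equation on $B_R\times[-R^2,0]$ with $A\le v\le 3A$, and $\nabla v=\nabla u$, $v_t=u_t$. Applying \eqr{e:localLY} to $v$ gives, on $B_{R/2}\times[-R^2/4,0]$,
\[
  \frac{|\nabla u|^2}{v^2}-\frac{u_t}{v}\le\frac{C_n}{R^2}.
\]

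First I would read off a two-sided bound on $u_t$. Dropping the nonnegative term $|\nabla u|^2/v^2$ and using $v\le 3A$ yields $u_t\ge-\frac{C_n}{R^2}\,v\ge-\frac{3C_n}{R^2}\,A$ on $B_{R/2}\times[-R^2/4,0]$. Running the same argument with $-u$ in place of $u$ --- the function $w:=-u+2A$ is again a positive solution with $A\le w\le 3A$ --- gives $-u_t\ge-\frac{C_n}{R^2}\,w\ge-\frac{3C_n}{R^2}\,A$, i.e.\ $u_t\le\frac{3C_n}{R^2}\,A$. Hence $\sup_{B_{R/2}\times[-R^2/4,0]}|u_t|\le\frac{3C_n}{R^2}\,\sup_{B_R\times[-R^2,0]}|u|$, which is \eqr{e:localLY2} with $C=3C_n$.

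For the gradient estimate I would feed this back into the displayed inequality. On $B_{R/2}\times[-R^2/4,0]$ we now know $|u_t|\le\frac{3C_n}{R^2}\,A$ and $v\ge A$, so $u_t/v\le\frac{3C_n}{R^2}$, and therefore
\[
  \frac{|\nabla u|^2}{v^2}\le\frac{C_n}{R^2}+\frac{u_t}{v}\le\frac{4C_n}{R^2}.
\]
Using $v\le 3A$ once more gives $|\nabla u|^2\le\frac{36C_n}{R^2}\,A^2=\frac{36C_n}{R^2}\,\sup_{B_R\times[-R^2,0]}u^2$, as claimed.

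There is no genuine obstacle here --- this is why the paper calls it an immediate corollary. The only points that need care are choosing the shift large enough that $u+2A$ is \emph{strictly} positive (so that \eqr{e:localLY} applies) while remaining comparable to $A$ from above, and applying the estimate to both $u$ and $-u$ to turn the one-sided differential Harnack quantity into a genuine two-sided bound on $u_t$; the rest is bookkeeping of dimensional constants. If one wishes to avoid the degenerate case $A=0$ altogether, one may instead work with $u+2A+\varepsilon$ and let $\varepsilon\to 0$ at the end.
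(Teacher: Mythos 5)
Your argument is correct and is essentially identical to the paper's: shift $u$ by a multiple of its sup norm to get a positive solution, apply the local Li--Yau estimate \eqr{e:localLY} to both the shifted $u$ and the shifted $-u$ to get the two-sided bound on $u_t$, then feed that bound back into the Li--Yau inequality for the gradient estimate. The only cosmetic difference is that the paper shifts by $m=\sup|u|$ (giving $0\le v,w\le 2m$) while you shift by $2A$ to keep $v,w$ bounded away from zero; this is marginally tidier but does not change the argument.
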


\begin{proof}
Let $m$ be the supremum of $|u|$ on $ B_R \times [-R^2,0] $.  Then $v=u+m$ and $w= m -u$ satisfy the heat equation and are positive with
\begin{align}
	0 \leq v,w \leq 2m \, .
\end{align}
Let $\Omega = B_{\frac{R}{2}} \times [-\frac{R^2}{4} , 0]$.
Applying \eqr{e:localLY} to $v$ gives on $\Omega$ that
\begin{equation}     
	 \frac{|\nabla u|^2}{v^2} - \frac{u_t}{v}  =   \frac{|\nabla v|^2}{v^2} - \frac{v_t}{v}  \leq \frac{C_n}{ R^2}  \, .
\end{equation}
Thus, we get on $\Omega$ that $u_t \geq -\frac{C_n\, v}{R^2} \geq - \frac{2C_n \, m}{R^2}$.  Applying \eqr{e:localLY} to $w$ gives on $\Omega$ that
\begin{equation}     \label{e:putbackinhere}
	 \frac{|\nabla u|^2}{w^2} + \frac{u_t}{w}  =   \frac{|\nabla w|^2}{w^2} - \frac{w_t}{w}  \leq \frac{C_n}{ R^2}  \, ,
\end{equation}
which gives that $u_t \leq \frac{C_n \, w}{R^2} \leq \frac{2C_n \, m}{R^2}$.    Combining the upper and lower bounds on $u_t$ gives \eqr{e:localLY2}.  Finally, 
using \eqr{e:localLY2}
in \eqr{e:putbackinhere} gives the spatial gradient estimate.
\end{proof}

Using the parabolic gradient estimate of Li-Yau in place of the Cheng-Yau gradient estimate, we will get generalizations of the harmonic rigidity theorems when the degree of growth is low.

\begin{Cor}	\label{c:pdlow}
Suppose that $\Ric_{M^n} \geq 0$.  If $d<1$, 
then $\cP_d (M) = \{ {\text{constant functions}} \}$.  If $d<2$, then $\cP_d (M) = \cH_d (M)$.  Finally, 
$\dim \cP_1 (M) \leq n+1$ with equality  only on $\RR^n$
\end{Cor}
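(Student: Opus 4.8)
The plan is to bootstrap everything from the parabolic gradient estimates of Corollary \ref{c:LY}, together with the already-established elliptic facts: the Cheng--Yau sublinear Liouville statement, the Li--Tam bound \eqr{e:linearg}, and the rigidity Theorem \ref{t:chcm}. The only input specific to the parabolic setting is Corollary \ref{c:LY}; everything else is a translation of the harmonic picture.

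First I would treat $d<1$. Let $u\in\cP_d(M)$. Applying the spatial estimate of Corollary \ref{c:LY} on $B_R\times[-R^2,0]$ gives
\[
\sup_{B_{R/2}\times[-R^2/4,0]}|\nabla u|^2 \leq \frac{C}{R^2}\,\sup_{B_R\times[-R^2,0]}u^2 \leq \frac{C\,C_u^2\,(1+R)^{2d}}{R^2}\,,
\]
which tends to $0$ as $R\to\infty$ since $2d<2$. As the regions $B_{R/2}\times[-R^2/4,0]$ exhaust $M\times(-\infty,0]$, we get $\nabla u\equiv 0$, so $u=u(t)$; then $u_t=\Delta u=0$ and $u$ is constant.

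Next, for $d<2$ I would prove $\cP_d(M)=\cH_d(M)$. The inclusion $\cH_d(M)\subset\cP_d(M)$ is immediate since a harmonic function is a static solution of the heat equation. For the converse, take $u\in\cP_d(M)$ and note that $u_t$ is again an ancient solution, because the metric on $M$ is time-independent so $\partial_t$ commutes with $\Delta$, giving $\partial_t(u_t)=\Delta(u_t)$. The time-derivative estimate of Corollary \ref{c:LY} then yields
\[
\sup_{B_{R/2}\times[-R^2/4,0]}|u_t| \leq \frac{C}{R^2}\,\sup_{B_R\times[-R^2,0]}|u| \leq \frac{C\,C_u\,(1+R)^d}{R^2}\,,
\]
i.e.\ $u_t$ has growth better by two powers of $R$ than $u$; since $d<2$ the right side tends to $0$ and $u_t\equiv 0$. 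Hence $u$ is time-independent, $\Delta u=u_t=0$, and $\sup_{B_R}|u|\leq C_u(1+R)^d$, so $u\in\cH_d(M)$.

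Finally, for the last assertion I apply the case $d=1<2$ just proved to get $\cP_1(M)=\cH_1(M)$, whence $\dim\cP_1(M)=\dim\cH_1(M)\leq n+1$ by \eqr{e:linearg}. If equality holds, then $\dim\cH_1(M)=n+1$, and Theorem \ref{t:chcm} forces $M=\RR^n$; conversely, on $\RR^n$ the $n$ coordinate functions together with the constants span an $(n+1)$-dimensional subspace of $\cP_1(\RR^n)$, so equality does occur there. The one place the argument does more than quote Corollary \ref{c:LY} is the identity $\cP_d(M)=\cH_d(M)$ for $d<2$, which hinges on $u_t$ being itself an ancient solution whose growth bound, courtesy of Corollary \ref{c:LY}, is two powers better than that of $u$; below the threshold $d=2$ this forces $u_t\equiv 0$. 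I do not anticipate a serious obstacle: the whole corollary is the exact parabolic shadow of the Cheng--Yau and Li--Tam results, with the Li--Yau estimate \eqr{e:localLY} playing the role the Cheng--Yau estimate plays in the elliptic theory.
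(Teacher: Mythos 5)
Your proof is correct and follows essentially the same route as the paper: the sublinear case via the spatial parabolic gradient estimate (Lemma \ref{l:hge}, equivalently Corollary \ref{c:LY}), the identification $\cP_d=\cH_d$ for $d<2$ via the $u_t$-estimate \eqr{e:localLY2}, and the $d=1$ rigidity via \eqr{e:linearg} and Theorem \ref{t:chcm}. One small remark: your observation that $u_t$ is itself an ancient solution is never actually used and can be dropped, since \eqr{e:localLY2} applied directly to $u$ already forces $u_t\equiv 0$ once $d<2$.
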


 \begin{proof}
Suppose that $u \in \cP_d(M)$ with $d<1$.
By taking $R \to \infty$ in Lemma \ref{l:hge}, we see that $|\nabla u| \equiv 0$ and, thus, $u$ is constant in space.  The equation $u_t = \Delta u$ then implies that $u$ is also constant in time.

 By Corollary \ref{c:LY}, we see that if $d < 2$, then $\cP_d(M) = \cH_d(M)$.   In particular, 
$\dim \cP_1 (M) \leq n+1$, by \cite{LiTa1}, and equality holds if and only if $M = \RR^n$ by \cite{ChCM}.
\end{proof}

\section{Recent results}

In the last two sections we will discuss two recent applications of the methods discussed here in two very different directions.  The first is a new proof, by Kleiner, of Gromov's theorem about groups of polynomial growth whereas the second, which is discussed in the next section, is to blow-ups (ancient solutions) of curvature flow.

\subsection{Connections with geometric group theory}

Recently Kleiner, \cite{K}, (see also Shalom-Tao, \cite{ST}) used, in part, the circle of ideas discussed here in his new proof of an important and foundational result in geometric group theory, originally due to Gromov, \cite{G}.   Gromov's theorem asserts that any finitely generated group of polynomial growth has a finite index nilpotent subgroup. 

Given an infinite group generated by a finite symmetric set, a function on the group is said to be harmonic if it obeys the mean value equality.   Here the mean is taken over adjacent elements.  Kleiner's proof has roughly four steps, cf. \cite{T1}, \cite{T2}.     The first is to construct plenty of polynomial growth harmonic functions on any group with polynomial growth.     The second step uses that the space of polynomial growth functions $\cH_d$ on the group is finite dimensional for each $d$.  The third step shows that any finitely generated group of polynomial growth that sits inside a compact Lie subgroup of the general linear group is virtually abelian.  Finally, the  fourth step uses an induction argument to reduce the general question to the third step.  Steps one and two together give  that step three applies.    To get the key finite dimensionality of the second step, Kleiner shows a Poincar\'e inequality and observes that the group satisfies a type of doubling condition.  

\section{A new approach to MCF in higher codimension}

We will see that ideas discussed in this survey naturally lead to a new approach to studying and classifying singularities of mean curvature flow (MCF) in higher codimension.  This is a subject that has been notoriously difficult and where much less is known than for hypersurfaces.   
The idea of \cite{CM10} is to use ideas described in the earlier sections to show that blowups of higher codimension MCF have codimension that typically is  much smaller than in the original flow.   In many important instances we can show that blowups are evolving hypersurfaces in an Euclidean subspace even when the original flow is very far from being hypersurfaces.     

A one-parameter family of $n$-dimensional submanifolds $M^n_{t}\subset \RR^N$ evolves by {\emph{mean curvature flow}} if each point $x(t)$ evolves by 
\begin{align}
\partial_{t} x =   -  \bH\, ,
\end{align}
where $\bH = -\Tr \, A$ is the mean curvature vector and $A$ is the second fundamental form.  It  is said to be {\emph{ancient}} if it exists for all negative times.    
The restrictions of the coordinate functions on $\RR^N$  to the evolving submanifolds satisfy the heat equation.  This  connects the study of MCF with the study of spaces of polynomial growth functions.  
Indeed one way of thinking about MCF is that the position vector $x\in M^n_t\subset \RR^N$ satisfies the nonlinear heat equation
\begin{align}
(\partial_t-\Delta_{M_t})\,x=0\, .
\end{align}
This equation is nonlinear since the Laplacian depends on the evolving submanifold $M_t$. 

There is a Lyapunov function for the flow that is particularly useful.  To define it recall that the Gaussian surface area $F$ of an $n$-dimensional submanifold
 $\Sigma^n \subset \RR^N$ is
\begin{align}
	F(\Sigma) = \left( 4\,\pi \right)^{ - \frac{n}{2}} \,  \int_{\Sigma} \e^{ - \frac{|x|^2}{4} } \, .
\end{align}
The factor $ \left( 4\, \pi \right)^{ - \frac{n}{2} }$ is chosen to make the Gaussian area one for  an $n$-plane through the origin.
Following \cite{CM9}, the entropy $\lambda$ is the supremum of $F$ over all translations and dilations
\begin{align}
	\lambda (\Sigma) = \sup_{c,x_0} \, F (c\,\Sigma + x_0) \, .
\end{align}
By Huisken's monotonicity, \cite{H}, it follows that $\lambda$ is monotone nonincreasing under the flow.  From this, and lower semi continuity of $\lambda$, 
we have that all blowups have entropy bounded by that of the initial submanifold in a MCF.   

 MCF  in higher codimension is a   complicated nonlinear parabolic system and much less is known than for hypersurfaces.
 The singularities are modeled by shrinkers $\Sigma$ that evolve by scaling.  Shrinkers
get more  complicated as the codimension increases.   
    The most fundamental shrinkers are cylinders $\SS^k_{\sqrt{2k}} \times \RR^{n-k}$,
but there are many others including all $n$-dimensional minimal submanifolds of the sphere $\partial B_{\sqrt{2n}} \subset \RR^N$.  
The entropy of round spheres is monotone decreasing in the dimension, \cite{St}, with
\begin{align}
\sqrt{2}\leq \lambda ({\SS}^n)<\lambda (\SS^{n-1})<\cdots<\lambda (\SS^1)=\sqrt{\frac{2\,\pi}{\e}}\approx 1.52\, ,
\end{align}
and $\lambda (\Sigma\times \RR)=\lambda (\Sigma)$.   

Any blowup of a MCF leads to an ancient flow.  A particularly important way of blowing up is around a fixed point in space-time.  This kind of ancient flow is a shrinker.  A submanifold $\Sigma$ is a shrinker if it satisfies the equation
\begin{align}
	\bH = \frac{x^{\perp}}{2} \, ,
\end{align}
where $x^{\perp}$ is the perpendicular part of the position vector field.  This is equivalent to saying that the one parameter family $\sqrt{-t}\,\Sigma$ flows by the MCF.  

\subsection{Bounding codimension}       Let $M_t^n \subset \RR^N$ be an ancient MCF with entropies 
$\lambda (M_t)\leq \lambda_0<\infty$.  The  space  $\cP_d$ consists of polynomial growth caloric functions $u(x,t)$ on $\cup_t M_t\times\{t\}$ so that  $(\partial_t - \Delta_{M_t})\, u=0$ and there exists $C$ depending on $u$ with
  \begin{align}
  	|u(x,t)| \leq C\, (1 + |x|^d+|t|^{\frac{d}{2}}) {\text{ for all }} (x,t) \text{ with }x\in M_t \, .
  \end{align}
  
  In \cite{CM10}, for each $d$ we bound the dimension of $\cP_d(M_t)$ for an ancient MCF $M_t\subset \RR^N$.  
  The bound is in terms of the dimension of $M_t$, the entropy, and $d$.
The next result is a special case of this for $d=1$ that shows that the codimension of the smallest Euclidean space that the flow sits inside is bounded in terms of the entropy.

\begin{Thm}	\label{t:counting2}
(Bounding codimension by entropy for ancient flows, \cite{CM10}).  If $M_t^n\subset \RR^N$ is an ancient MCF, then $M_t\subset$ an Euclidean subspace of dimension $\leq C_n\, \sup_t \lambda (M_t)$.
\end{Thm}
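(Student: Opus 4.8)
The plan is to show that the coordinate functions of $\RR^N$, restricted to the flow, span a finite-dimensional space modulo constants whose dimension is controlled by the entropy, and that this span actually cuts out a small Euclidean subspace containing $M_t$. The first observation is that each coordinate function $x_j$ satisfies $(\partial_t - \Delta_{M_t})\,x_j = 0$ along the flow (this is exactly the statement that $x \in M_t$ solves the nonlinear heat equation), and each $x_j$ has linear growth, so $x_j \in \cP_1(M_t)$. Hence the $N$ coordinate functions, together with the constant function $1$, give a linear subspace $V \subset \cP_1(M_t)$. If we know $\dim \cP_1(M_t) \le C_n\,\sup_t \lambda(M_t)$ — which is exactly the $d=1$ case of the dimension bound from \cite{CM10} referenced just before the statement — then $\dim V \le C_n \sup_t \lambda(M_t)$ as well.

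The second step is to translate a linear relation among the coordinate functions into a statement that the flow lies in an affine subspace. Suppose $\dim V = m+1$; pick constants $c_{kj}$ and $b_k$, for $k = 1,\dots,N-m$, spanning the space of linear relations, so that $\sum_j c_{kj}\,x_j + b_k = 0$ on $\cup_t M_t\times\{t\}$ identically. Each such relation says $M_t$ is contained in the hyperplane $\{\sum_j c_{kj}\,y_j = -b_k\}\subset\RR^N$, and intersecting all of them puts $M_t$ inside an affine subspace of dimension $\le m$. After translating the origin into this affine subspace (which does not change that $M_t$ still solves MCF and still has entropy $\le \lambda_0$), we conclude $M_t$ lies in a linear subspace of dimension $\le m = \dim V - 1 \le C_n\,\sup_t\lambda(M_t)$, possibly after absorbing the constant into $C_n$. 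This gives the theorem.

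The main obstacle — and really the content being cited — is the dimension bound $\dim \cP_1(M_t) \le C_n\,\sup_t\lambda(M_t)$ itself, which is the $d=1$ instance of the general counting theorem of \cite{CM10}. In the static ($\Ric \ge 0$) setting this is the Li--Tam bound $\dim\cH_1 \le n+1$, and in the heat-equation setting on a fixed manifold it is Theorem~\ref{t:CMPd1} together with $\dim\cH_1$. For an ancient MCF the analogue must be proved along the lines of the two-step scheme sketched after Theorem~\ref{t:cm14}: first a doubling/compression step producing a subspace $K$ on which a ratio bound like \eqr{e:step1} holds at some scale, using Huisken monotonicity and the entropy bound $\lambda(M_t)\le\lambda_0$ to replace the volume-doubling input (the Gaussian density gives the substitute for the volume comparison, with the constant depending on $\lambda_0$); and second a covering-plus-Poincar\'e argument, using the parabolic reverse Poincar\'e (Caccioppoli) inequality for caloric functions in place of Lemma~\ref{l:rp}, to bound $\dim K$. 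I expect the delicate point to be extracting a good Poincar\'e inequality and a controlled cover of the parabolic ball intrinsic to the evolving submanifolds $M_t$ purely from the entropy bound, since unlike the $\Ric\ge 0$ case there is no pointwise curvature control — one must use the monotonicity formula and the structure of MCF rather than comparison geometry.
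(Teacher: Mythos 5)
Your proposal matches the paper's approach exactly: the paper explicitly frames Theorem~\ref{t:counting2} as "a special case of this for $d=1$" of the $\cP_d$ dimension bound from \cite{CM10}, and your derivation---observe each $x_j \in \cP_1(M_t)$ since $(\partial_t - \Delta_{M_t})x_j = 0$ and $x_j$ has linear growth, apply the entropy-controlled bound $\dim\cP_1(M_t) \le C_n\sup_t\lambda(M_t)$, then convert linear relations among $\{1,x_1,\dots,x_N\}$ restricted to the flow into an affine subspace containing every $M_t$ and translate it to the origin using translation-invariance of MCF and of entropy---is precisely this reduction. The paper does not reproduce the proof of the underlying $\cP_1$ bound (it cites \cite{CM10}), so your final paragraph speculating on its internal mechanism is extra, but your guess at the doubling-plus-reverse-Poincar\'e scheme with Huisken monotonicity supplying the scale-invariant volume control is the right flavor.
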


There is a parallel of this result that can been seen as a generalization of a well-known result of Cheng-Li-Yau.  To explain this let $\Sigma^n \subset \RR^N$ be a shrinker with finite entropy $\lambda (\Sigma)$. 
    We will use $\| u \|_{L^2}$ to denote the Gaussian $L^2$ norm.
As in \cite{CM6}, the drift Laplacian (Ornstein-Uhlenbeck operator) $\cL = \Delta - \frac{1}{2} \nabla_{x^T}$ is self-adjoint with respect to the Gaussian inner product
$
	\int_{\Sigma} u\, v \, \e^{ -  \frac{|x|^2}{4} }  \, .
$
We will say that $u$ is a $\mu$-eigenfunction   if $\cL \, u = - \mu \, u$   and $0 < \| u \|_{L^2} < \infty$. 
    The {\it spectral counting function} $\cN (\mu)$ is the number of eigenvalues $\mu_i\leq \mu$ counted with multiplicity.   In \cite{CM10} we bound the spectral 
    counting function for any shrinker in terms of $n$, $\lambda(\Sigma)$, and $\mu$.  
        As a special case we get:
    	
  \begin{Thm}	\label{c:counting}
  (Bounding codimension by entropy for shrinkers, \cite{CM10}).  
  If $\Sigma^n \subset \RR^N$ is a shrinker, then $\Sigma\subset$ an Euclidean subspace of dimension $\leq C_n \, \lambda(\Sigma)$.
  \end{Thm}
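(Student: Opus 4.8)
The plan is to produce inside $\Sigma$ a large supply of eigenfunctions of the drift Laplacian $\cL=\Delta-\tfrac12\nabla_{x^T}$ coming from the ambient coordinate functions, all with one and the same eigenvalue, and then to feed this into the general spectral counting estimate of \cite{CM10} at that eigenvalue. Concretely, I would show that each restriction $x_i|_\Sigma$ of a Euclidean coordinate function is a $\tfrac12$-eigenfunction of $\cL$, that the number of linearly independent such functions is exactly the dimension of the smallest linear subspace containing $\Sigma$, and that $\cN(\tfrac12)$ for a shrinker is at most $C_n\,\lambda(\Sigma)$.

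First I would record the eigenvalue identity. Since $\Sigma$ is a shrinker, $\bH=\tfrac12\,x^{\perp}$, and in the sign convention $\bH=-\Tr A$ the standard identity $\Delta_\Sigma x=\Tr_\Sigma A$ becomes $\Delta_\Sigma x=-\bH=-\tfrac12\,x^{\perp}$; since $x_i$ is linear one has $\nabla_\Sigma x_i=(\partial_i)^{T}$ and hence $\nabla_{x^T}x_i=\langle\partial_i,x^{T}\rangle=(x^{T})_i$. Therefore
\begin{equation}
	\cL\,x_i=\Delta_\Sigma x_i-\tfrac12\,\nabla_{x^T}x_i=-\tfrac12\,(x^{\perp})_i-\tfrac12\,(x^{T})_i=-\tfrac12\,x_i\,,
\end{equation}
so every coordinate function is a $\tfrac12$-eigenfunction of $\cL$ (one can double-check this directly on the cylinders $\SS^{k}_{\sqrt{2k}}\times\RR^{n-k}$ and on minimal submanifolds of $\partial B_{\sqrt{2n}}$). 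Next I would check integrability: finite entropy forces polynomial volume growth of $\Sigma$, so $\int_\Sigma|x_i|^2\,\e^{-|x|^2/4}\le\int_\Sigma|x|^2\,\e^{-|x|^2/4}<\infty$, and $x_i|_\Sigma$ is a genuine nonzero element of the Gaussian $L^2$-space whenever $x_i\not\equiv 0$ on $\Sigma$.

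Then I would do the counting. Let $W\subset\RR^N$ be the smallest linear subspace with $\Sigma\subset W$, and set $k=\dim W$; after a rotation we may take $W=\RR^{k}\times\{0\}$. Then $x_1|_\Sigma,\dots,x_k|_\Sigma$ are nonzero and linearly independent, since a relation $\sum_{j\le k}a_j\,x_j\equiv 0$ on $\Sigma$ would confine $\Sigma$ to a proper subspace of $W$. Hence they span a $k$-dimensional space of $\tfrac12$-eigenfunctions of $\cL$ in the Gaussian $L^2$-space, so $\cN(\tfrac12)\ge k$. On the other hand the general estimate of \cite{CM10} bounds $\cN(\mu)$ for any shrinker in terms of $n$, $\lambda(\Sigma)$ and $\mu$; specializing to $\mu=\tfrac12$ gives $\cN(\tfrac12)\le C_n\,\lambda(\Sigma)$. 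Combining, $k\le C_n\,\lambda(\Sigma)$, and $\Sigma$ is contained in the linear subspace $W$ of dimension $\le C_n\,\lambda(\Sigma)$, which is the assertion.

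The substantive input, and the main obstacle, is the general spectral counting bound of \cite{CM10}, which adapts the two-step Colding--Minicozzi dimension-counting scheme to the Gaussian-weighted setting on a shrinker. The first step, which does not use the eigenvalue equation, is a compression argument: inside any given $2m$-dimensional space of $\mu$-eigenfunctions it produces an $m$-dimensional subspace whose Gaussian $L^2$ mass does not grow too fast under a fixed dilation of the relevant scale. The second step uses a covering of the Gaussian-relevant region, a weighted Poincar\'e inequality, and the reverse Poincar\'e (Caccioppoli) inequality for $\cL$-eigenfunctions to bound $m$; here the entropy $\lambda(\Sigma)$ enters in place of the asymptotic volume ratio, both in the covering count and in the weighted Poincar\'e constant, and obtaining these Gaussian analogues with the correct linear dependence on $\lambda$ is the hard part. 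Once that machinery is available the reduction above is essentially formal, the only delicate points being the eigenvalue identity and the integrability of the coordinate functions.
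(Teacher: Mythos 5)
Your proposal is correct and matches the paper's intended argument: the paper presents Theorem~\ref{c:counting} precisely as a special case of the spectral counting bound of \cite{CM10}, obtained by observing that the ambient coordinate functions restrict to $\tfrac12$-eigenfunctions of the drift Laplacian $\cL$ and that their number equals the dimension of the smallest linear subspace containing $\Sigma$. Your eigenvalue computation $\cL x_i=-\tfrac12 x_i$, the linear-independence/linear-subspace identification, and the reduction to $\cN(\tfrac12)\le C_n\,\lambda(\Sigma)$ are all exactly the route the paper takes, and you have correctly identified that the real content lies in the \cite{CM10} counting bound with linear dependence on $\lambda$.
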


Our estimates in Theorems \ref{t:counting2} and \ref{c:counting} are linear in the entropy, which is known to be sharp.  The corresponding linear estimate   for algebraic varieties in
 complex projective space follows from B\'ezout's theorem, $18.3$ in \cite{Ha}.    
 When $\Sigma \subset \partial B_{\sqrt{2\,n}}\subset \RR^N$ is a closed $n$-dimensional minimal submanifold of the sphere and the entropy reduces to the volume,  then this estimate follows from theorem $6$ in \cite{CLY}.

  In theorem $1.5$ in \cite{dCW}, do Carmo and Wallach construct families of minimal submanifolds of the sphere, each  isometric to the same round sphere, 
 generalizing earlier results of Calabi  \cite{Ca}.  The boundary immersions of the families in \cite{dCW} lie in a lower-dimensional affine  space.   Obviously, they have the same
  volume and, since they are contained in spheres, also the same entropy.  Thus,  the number of linearly independent coordinate functions can vary along a family.

   \subsection{Sharp bound for codimension}
Suppose that $M_t^n\subset \RR^N$ is an ancient MCF with $\sup_t\lambda (M_t)<\infty$.  For each constant $c>0$ define the flow $M_{c,t}$ by
\begin{align}
M_{c,t}=\frac{1}{c}\,M_{c^{2}\,t}\, .
\end{align}
It follows that $M_{c,t}$ is an ancient MCF as well.  Since  $\sup_t\lambda(M_t)<\infty$, Huisken's monotonicity, \cite{Hu}, and work of Ilmanen, \cite{I}, White, \cite{Wh1}, gives that every sequence $c_i\to \infty$ has a subsequence (also denoted by $c_i$) so that $M_{c_i,t}$ converges to a shrinker $M_{\infty,t}$ (so $M_{\infty,t}=\sqrt{-t}\,M_{\infty,-1}$) with $\sup_t\lambda (M_{\infty,t})\leq \sup_t\lambda (M_t)$.  We will say that such a $M_{\infty,t}$ is a tangent flow at $-\infty$ of the original flow.

The next result gives a sharp bound for the codimension:

\begin{Thm} \label{t:ancientcylinder}
(Sharp codimension bound, \cite{CM10}).  
  If $M^n_t\subset\RR^N$ is an ancient MCF and one tangent flow at $-\infty$ is a cylinder, then  $M_t$ is a flow of hypersurfaces in a Euclidean subspace.  
  \end{Thm}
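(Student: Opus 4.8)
The plan is to reduce the statement to a dimension count for the space $\cP_1$ of linear growth caloric functions, exploiting that on a shrinking cylinder this space is as small as it can possibly be. Let $V \subset \RR^N$ be the smallest affine subspace with $\cup_t M_t \subset V$, and set $D = \dim V$. Since each $M_t$ is $n$-dimensional, $D \ge n$, and the theorem is equivalent to the bound $D \le n+1$. (Theorem \ref{t:counting2} already gives $D \le C_n \sup_t \lambda(M_t)$, but this is far from sharp; the hypothesis that a tangent flow at $-\infty$ is a cylinder is what lets one replace $C_n \sup_t \lambda$ by $n+1$.)

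First I would establish the lower bound $D + 1 \le \dim \cP_1(M_t)$. Restriction of affine functions on $\RR^N$ to $\cup_t M_t \times \{t\}$ is a linear map into $\cP_1(M_t)$: a function $x \mapsto a \cdot x + b$ with $a,b$ constant is caloric along the flow because $(\partial_t - \Delta_{M_t})\, x = 0$, and it clearly has linear growth. Its kernel is the space of affine functions vanishing on $V$, which has dimension $N - D$, so the image has dimension $(N+1) - (N - D) = D + 1$; hence $D + 1 \le \dim \cP_1(M_t)$.

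Next I would prove the blow-down bound
\begin{equation}
	\dim \cP_1(M_t) \le \dim \cP_1(M_{\infty,t}) \, ,
\end{equation}
where $M_{\infty,t}$ is the given cylinder tangent flow at $-\infty$, realized along some $c_i \to \infty$ with $M_{c_i,t} \to M_{\infty,t}$. Since $\cP_1$ is invariant under parabolic rescaling, $\dim \cP_1(M_{c_i,t}) = \dim \cP_1(M_t)$ for every $i$. Taking a basis of $\cP_1(M_t)$, transporting it to the rescaled flows and normalizing, one uses interior gradient estimates for caloric functions on the flow (in the spirit of Lemma \ref{l:hge} and Corollary \ref{c:LY}) together with the compactness theory for MCF of bounded entropy (Huisken monotonicity \cite{Hu}, Ilmanen \cite{I}, White \cite{Wh1}) to pass to a subsequential limit lying in $\cP_1(M_{\infty,t})$. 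The essential point — and the step I expect to be the main obstacle — is that linear independence survives this limit: one must rule out that the normalized basis concentrates or escapes, uniformly in $i$. This is done as in the second step sketched earlier in the survey, by constructing averaging (evaluation) maps into a fixed Euclidean space and proving they are injective with bounds uniform along the sequence, the needed uniform volume doubling and Poincar\'e (or mean value) inequalities coming from the entropy bound and the noncollapsing of the flow.

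Finally I would compute $\dim \cP_1(M_{\infty,t}) = n+2$ for a shrinking cylinder, $M_{\infty,t} = \sqrt{-t}\,\cC$ with $\cC = \SS^k_{\sqrt{2k}} \times \RR^{n-k}$, by separation of variables. Expanding a caloric function on $M_{\infty,t}$ into spherical harmonics $Y_\ell$ on the homothetically shrinking sphere factor, the $\ell$-th mode, after removing the angular part, satisfies the heat equation on $\RR^{n-k}$ with a zeroth order term $\tfrac{\lambda_\ell}{t}\, v$, where $\lambda_\ell = \tfrac{\ell(\ell+k-1)}{2k}$. Solving out the $t$-dependence shows: for $\ell \ge 2$, where $\lambda_\ell > 1$, the only solution of linear growth vanishes; for $\ell = 0$, where $\lambda_0 = 0$, one gets exactly $\cP_1(\RR^{n-k})$, i.e. the constants together with the $n-k$ coordinate functions of $\RR^{n-k}$; and for $\ell = 1$, where $\lambda_1 = \tfrac12$, one gets exactly the $k+1$ functions $(-t)^{1/2}$ times a first spherical eigenfunction, i.e. the $k+1$ coordinate functions of the $\RR^{k+1}$ containing the sphere. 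Hence $\dim \cP_1(M_{\infty,t}) = (n-k+1) + (k+1) = n+2$. Combining the three steps gives $D + 1 \le n+2$, so $D \le n+1$; since $M_t$ is $n$-dimensional and lies in the affine subspace $V$ of dimension at most $n+1$, it is a flow of hypersurfaces in the Euclidean subspace $V$, which proves the theorem.
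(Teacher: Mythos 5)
Your overall strategy — restrict affine functions to get a lower bound on $\dim\cP_1(M_t)$, compute $\dim\cP_1$ on the shrinking cylinder by separation of variables, and sandwich the two via a blowdown comparison — is the right shape for this theorem, and two of your three steps are essentially fine. The lower bound $D+1 \le \dim\cP_1(M_t)$ is correct, and your separation-of-variables count $\dim\cP_1(M_{\infty,t}) = n+2$ checks out: the $\ell=0$ mode contributes $\cP_1(\RR^{n-k})$, the $\ell=1$ mode (with $\lambda_1 = 1/2$) contributes exactly the $k+1$ restrictions $(-t)^{1/2}Y_1$ because linear growth rules out a nonconstant $\RR^{n-k}$-factor, and $\ell\ge 2$ is killed by $\lambda_\ell>1$; this is also consistent with the lower bound applied to the cylinder itself.

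The gap is in step 2, and the machinery you invoke there is the wrong tool. The averaging/evaluation-map argument from the sketch of Theorem~\ref{t:cm14} proves \emph{finite} dimensionality of a subspace of $\cP_d$ with controlled doubling; it outputs a bound of the form $\dim K \le C(n,\lambda_0,d)$, a constant depending on the doubling/Poincar\'e data, not a comparison with the dimension on a blowdown. It does not — and cannot, as stated — yield the sharp inequality $\dim\cP_1(M_t) \le \dim\cP_1(M_{\infty,t})$ because the map into $\RR^N$ in that argument has no relation to the cylinder; indeed, applied directly it already loses a factor of two from the two-step structure. Moreover, the comparison you want is really a unique-continuation-from-infinity statement: specializing to $u = a\cdot x$, you must show that if $a\cdot x$ vanishes (modulo constants) on the cylinder limit, then $a\cdot x$ is constant along the whole flow. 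This is a rigidity statement for caloric functions of low growth, and the natural tool is a parabolic frequency/doubling monotonicity (a three-circles theorem with the Gaussian weight on the evolving submanifolds) controlling the growth exponent of $u$ across scales, so that a nonconstant $u\in\cP_1$ has bounded-below frequency at every scale and hence a nontrivial normalized blowdown. That monotonicity is exactly what is missing from your proposal; without it, nothing prevents a nonconstant $u\in\cP_1(M_t)$ from blowing down to zero (or to a constant), which would break the injectivity of the comparison map and with it the conclusion $D\le n+1$. So the high-level plan is sound and steps 1 and 3 are correct, but step 2 needs a genuinely different ingredient than the averaging maps you cite.
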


We believe that this theorem will have wide ranging consequences for MCF in higher codimension.   To briefly explain some of these we make the following conjecture:  
 
  \begin{Con}  \label{c:bigC}
  For $n\leq 4$ and any codimension, round generalized cylinders, $\SS^{k}_{\sqrt{2\,k}}\times \RR^{n-k}$, are the shrinkers with the lowest entropy. 
  \end{Con}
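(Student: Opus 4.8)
\medskip
\noindent\textbf{A possible line of attack on Conjecture~\ref{c:bigC}.}
The plan is to use the codimension bound of Theorem~\ref{c:counting} to reduce the conjecture to a classification of low-entropy shrinkers in an ambient space of a priori bounded dimension, and then to bootstrap from the codimension-one case. I read the statement as asserting at least that the infimum of $\lambda$ over nontrivial $n$-dimensional shrinkers (nontrivial meaning not a multiplicity-one hyperplane) equals $\lambda(\SS^n)$ and is attained only on the round generalized cylinders, and more optimistically that every nontrivial $n$-dimensional shrinker with $\lambda\le\lambda(\SS^1)=\sqrt{2\pi/\e}$ is one of the $\SS^k_{\sqrt{2\,k}}\times\RR^{n-k}$; I sketch the first version.

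First I would reduce the codimension. Since the round generalized cylinders are themselves shrinkers and $\lambda(\SS^k\times\RR^{n-k})=\lambda(\SS^k)$, the infimum $\Lambda_n$ of $\lambda$ over nontrivial $n$-dimensional shrinkers satisfies $\Lambda_n\le\lambda(\SS^n)\le\lambda(\SS^1)<2$. By Theorem~\ref{c:counting}, every nontrivial shrinker with $\lambda\le\lambda(\SS^1)$ lies, after a rotation, in a fixed $\RR^{N_0}$ with $N_0\le C_n\,\lambda(\SS^1)$, an explicit number once $n\le4$ is fixed. In this regime the entropy is below $2$, so the relevant mean curvature flows are unit-regular and multiplicity one and the compactness properties of bounded-entropy shrinkers are available; hence $\Lambda_n$ is attained by some $\Sigma_0\subset\RR^{N_0}$, which is smooth and nontrivial because $1<\Lambda_n<2$. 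It then suffices to prove that $\Sigma_0$ is a generalized cylinder, which forces $\Lambda_n=\lambda(\SS^n)$ and identifies the minimizers.

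Next I would show that the minimizer is entropy-stable and classify the stable shrinkers. If $\Sigma_0$ were not entropy-stable, then the higher-codimension analogue of the construction in Colding--Minicozzi's work on generic flows --- available here precisely because the entropy is below $2$ --- would produce a small perturbation of $\Sigma_0$ whose flow develops a singularity modeled on a shrinker of strictly smaller entropy, contradicting minimality of $\Lambda_n$. What remains is to prove that the only entropy-stable $n$-dimensional shrinkers, in any codimension, are $\RR^n$ and the round generalized cylinders $\SS^k_{\sqrt{2\,k}}\times\RR^{n-k}$. In codimension one this is exactly the Colding--Minicozzi classification of $F$-stable and entropy-stable shrinkers, so the base case $N=n+1$ of the scheme is the theorem of Colding--Ilmanen--Minicozzi--White together with its extensions by Bernstein--Wang and Zhu. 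Theorem~\ref{t:ancientcylinder} is the companion statement one wants in the ancient setting: it collapses an ancient flow with a cylindrical tangent flow at $-\infty$ to a flow of hypersurfaces, and an analogous ``a cylindrical singularity model forces codimension one'' principle, applied to the flow of $\Sigma_0$, would reduce the higher-codimension case to codimension one as well.

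The hard part --- and the reason this remains a conjecture --- is the last step in codimension at least two: there is at present no classification of entropy-stable, let alone $F$-stable, shrinkers of dimension $n$ in higher codimension. The codimension-one arguments lean heavily on the scalar nature of the second fundamental form (Simons-type identities for $|A|^2$, normalization by the scalar mean curvature, the sheeting theorem and the a priori estimates behind smooth compactness), and none of this survives when $A$ is vector valued, the normal bundle is nontrivial, and Brakke flows can occur with higher multiplicity; even the compactness of bounded-entropy shrinkers invoked above is far more delicate here. So the genuine obstacle is to develop, at least for the finite list of ambient dimensions that Theorem~\ref{c:counting} permits when $n\le4$, enough second-variation and partial-regularity theory in higher codimension to pin down the entropy-stable shrinkers; without it the scheme is circular, since it wants lower-dimensional and lower-codimensional cases to bootstrap while some of those very cases are themselves open.
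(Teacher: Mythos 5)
The statement you were asked to ``prove'' is labeled \emph{Conjecture}~\ref{c:bigC} in the paper, and the paper makes no claim to prove it. The authors explicitly record the state of the art: the case $n=1$ is elementary (shrinking curves lie in affine two-planes), the case $n=2$, $N=3$ is a theorem of Bernstein--Wang, and the remaining cases are open. So there is no paper proof against which to compare your argument; the correct baseline here is that nothing is proved.

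Your proposal should therefore be read as a strategy sketch, and as such it is sensible and uses the paper's own tools in essentially the intended way: Theorem~\ref{c:counting} to cap the ambient dimension in terms of entropy, Theorem~\ref{t:ancientcylinder} (and the hoped-for analogue ``cylindrical tangent flow at $-\infty$ forces codimension one'') to collapse the problem toward hypersurfaces, and the CIMW--style entropy-stability classification to finish in codimension one. You also correctly name the genuine obstruction: there is at present no classification of entropy-stable (or even $F$-stable) shrinkers in codimension $\ge 2$, and the codimension-one machinery (Simons-type identities, scalar $H$, sheeting, smooth compactness for bounded-entropy shrinkers) does not transfer. This is exactly why the statement is a conjecture. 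Two further cautions. First, your claim that entropy $<2$ gives unit-regularity and multiplicity one, and hence compactness of the minimizing sequence, is a hypersurface fact; in higher codimension even the partial regularity and compactness theory you invoke is substantially weaker and should not be taken for granted. Second, the paper's intended reading of the conjecture is the stronger one you mention in passing --- every nontrivial $n$-dimensional shrinker with entropy at most $\lambda(\SS^1)$ is a round generalized cylinder --- since this is what feeds into the application after Theorem~\ref{t:ancientcylinder}; your sketch of ``the first version'' (infimum attained only at $\SS^n$) would not by itself deliver the paper's stated consequence. None of this is a criticism of your honesty --- you flagged the gap --- but the bottom line is that what you have is a reduction of one open problem to another open problem, not a proof, and the paper does not contain one either.
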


We also conjecture that for any $n$ the round $\SS^n$ has the least entropy of any closed shrinker $\Sigma^n \subset \RR^N$.  
 The corresponding result for hypersurfaces was proven in \cite{CIMW}; see also \cite{HW}.     It was also noted in \cite{CIMW} that the ``Simons cone'' over $\SS^2\times \SS^2$ has entropy strictly less than that of $\SS^1\times \RR^4$.  In other words, 
already for $n=5$, the round generalized cylinders  is not a complete list of the lowest entropy shrinkers.
Conjecture \ref{c:bigC} is known for $n=1$ since shrinking curves  are  contained in  affine two-planes and have entropy at least that of round circles.  It is also known to hold for $n=2$ and $N=3$ by work of Bernstein-Wang, \cite{BW}.  

If Conjecture \ref{c:bigC} holds, then combined with Theorem \ref{t:ancientcylinder} it would follow that any ancient flow $M_t^n \subset \RR^N$ with entropy at most $\lambda (\SS^1)$ plus some  small $\epsilon > 0$ would be a hypersurface in some Euclidean subspace of dimension $n+1$ provided $n\leq 4$.  This would give that all blowups near any cylindrical singularity for $n\leq 4$ are ancient flows of hypersurfaces.    Thus, reducing the system to a single differential equation.

\end{document}